\apptocmd{\thebibliography}{\setlength{\itemsep}{2pt}}{}{}
\newtheorem{theorem}{Theorem}[section]
\newtheorem{corollary}[theorem]{Corollary}
\newtheorem{lemma}[theorem]{Lemma}
\newtheorem{assumption}[theorem]{Assumption}
\newtheorem{proposition}[theorem]{Proposition}
\theoremstyle{definition}
\newtheorem{definition}[theorem]{Definition}
\newtheorem{remark}[theorem]{Remark}
\newtheorem{example}[theorem]{Example}
\numberwithin{equation}{section}
\newcommand{\BE}{\begin{equation}}
\newcommand{\BEQ}[1]{\BE\label{#1}} 
\newcommand{\rfb}[1]{\mbox{\rm
   (\ref{#1})}\ifx\undefined\stillediting\else:\fbox{$#1$}\fi}
\newfont{\roma}{cmr10 scaled 1200}
\renewcommand{\cline}{{\mathbb C}}
\newcommand{\rline}  {{\mathbb R}}
\newcommand{\CCC}  {{\bf C}}
\newcommand{\GGG}  {{\bf G}}
\newcommand{\PPP}  {{\bf P}}
\newcommand{\dd}   {{\rm d}\hbox{\hskip 0.5pt}}
\newcommand{\Ascr} {{\cal A}}
\newcommand{\Bscr} {{\cal B}}
\newcommand{\Cscr} {{\cal C}}
\newcommand{\Dscr} {{\cal D}}
\newcommand{\Fscr} {{\cal F}}
\newcommand{\Jscr} {{\cal J}}
\newcommand{\Mscr} {{\cal M}}
\newcommand{\Oscr} {{\cal O}}
\newcommand{\Wscr} {{\cal W}}
\newcommand{\mm}    {{\hbox{\hskip 0.5pt}}}
\newcommand{\m}     {{\hbox{\hskip 1pt}}}
\newcommand{\bluff} {{\hbox{\raise 15pt \hbox{\mm}}}}
\newcommand{\sbluff}{{\hbox{\raise  7pt \hbox{\mm}}}}
\renewcommand{\L}    {{\Lambda}}
\renewcommand{\l}    {{\lambda}}
\newcommand{\G}      {{\Gamma}}
\newcommand{\g}      {{\gamma}}
\newcommand{\FORALL} {{\hbox{$\hskip 11mm \forall \;$}}}
\newcommand{\e}      {{\varepsilon}}
\newcommand{\rarrow} {{\,\rightarrow\,}}
\renewcommand{\Re}   {{\rm Re\,}}
\newcommand{\Ker}    {{\rm Ker\,}}
\newcommand{\bbm}[1]{\left[\begin{matrix} #1 \end{matrix}\right]}
\newcommand{\sbm}[1]{\left[\begin{smallmatrix} #1
   \end{smallmatrix}\right]}
\begin{document}
\renewcommand{\thefootnote}{\fnsymbol{footnote}}
\renewcommand{\thefootnote}{\fnsymbol{footnote}}
\newcommand{\footremember}[2]{%
   \footnote{#2}
    \newcounter{#1}
    \setcounter{#1}{\value{footnote}}%
}
\newcommand{\footrecall}[1]{%
    \footnotemark[\value{#1}]%
}
\makeatletter
\def\blfootnote{\gdef\@thefnmark{}\@footnotetext}
\makeatother

\begin{center}
{\Large \bf Minimal order controllers for output\\[1ex]
            regulation of nonlinear systems}\\[2ex]
Vivek Natarajan and George Weiss
\blfootnote{This work was partially supported by grant no. 800/14
of the Israel Science Foundation.}
\blfootnote{V. Natarajan (vivek.natarajan@iitb.ac.in) is with the Systems
and Control Engineering Group, Indian Institute of Technology
Bombay, Mumbai, India, 400076, Ph:+91 2225765385}
\blfootnote{G. Weiss (gweiss@eng.tau.ac.il) is with the School of
Electrical Engineering, Tel Aviv University, Ramat Aviv, Israel,
69978, Ph:+972 36405164.}
\end{center}

\begin{abstract} {\small\vbox{\noindent
This paper is about the nonlinear local error feedback regulator
problem. The plant is a nonlinear finite-dimensional system with a
single control input and a single output and it is locally
exponentially stable around the origin. The plant is driven, via a
separate disturbance input, by a Lyapunov stable exosystem whose
states are nonwandering. The reference signal that the plant output
must track is a nonlinear function of the exosystem state. The local
error feedback regulator problem is to design a dynamic feedback
controller, with the tracking error as its input, such that (i) the
closed-loop system of the plant and the controller is locally
exponentially stable, and (ii) the tracking error tends to zero for
all sufficiently small initial conditions of the plant, the controller
and the exosystem. Under the assumption that the above regulator
problem is solvable, we propose a nonlinear controller whose order is
relatively small - typically equal to the order of the exosystem, and
which solves the regulator problem. The emphasis is on the low order
of the controller. In contrast, previous results on the regulator
problem have typically proposed controllers of a much larger order.
The stability assumption on the plant (which can be relaxed to some
extent) is crucial for making it possible to design a low order
controller. We will show, under certain assumptions, that our proposed
controller is of minimal order. Three examples are presented - the
first illustrates our controller design procedure using an exosystem
whose trajectories are periodic even though the state operator of the
linearized exosystem contains a nontrivial Jordan block. The second
example is more involved, and shows that sometimes a nontrivial
immersion of the exosystem is needed in the design. The third example,
based on output voltage regulation for a boost power converter, shows
how the regulator equations may reduce to a first order PDE with no
given boundary conditions, but which nevertheless has a locally unique
solution.}}\vspace{-3mm}
\end{abstract}

\noindent {\bf Keywords}.
Lyapunov stable nonlinear exosystem,
nonwandering points, error feedback regulator problem, center
manifold, regulator equations, minimal order controller, internal
model, nontrivial Jordan block,  boost converter, quasilinear PDE.
\vspace{-6mm}

\section{Introduction} \label{sec1} \vspace{-1mm}
\setcounter{equation}{0} 

\ \ \ Given a stabilizable plant and an unstable exosystem, the {\em
local error feedback regulator problem} is to design a locally
stabilizing controller (for the plant) which guarantees the tracking
of certain reference signals by the plant output, even when the plant
is driven by external disturbance signals. The reference and
disturbance signals are both functions of the exosystem state, assumed
to be sufficiently small. The input to the controller is the tracking
error and its output is the control input to the plant. The robust
version of the regulator problem is to design a controller that
guarantees both stabilization and tracking for a set of uncertain
plants.

The regulator problem, and its robust version, for linear
finite-dimensional plants and exosystems was addressed in
\cite{FrWo:75} and \cite{Fra:77} using geometric methods. The
solvability of the problem was characterized in terms of the
solvability of certain matrix equations, known in the literature as
the regulator equations. The {\em internal model principle}, which
states that for robust regulation the dynamic structure of the
exosystem (suitably duplicated) must be incorporated into the
controller, was introduced in \cite{FrWo:75}. Following these works,
the regulator problem for nonlinear finite-dimensional plants and
exosystems was addressed in \cite{IsBy:90} in a local setting,
i.e. the proposed controller ensured that its closed-loop system with
the plant is locally exponentially stable and that tracking is
achieved for sufficiently small initial conditions of the plant, the
controller and the exosystem. Nonlinear regulator equations, a
generalization of the regulator equations in \cite{Fra:77}, were
introduced in \cite{IsBy:90}.

Since the early 1990's many researchers have extended the results in
\cite{IsBy:90}, mainly by developing controllers that solve the
nonlinear robust regulator problem in local (\cite{Pri:93, Pri:97,
ByPrIsKa:97}), semi-global (\cite{Kha:94,Isi:97,SeIsMa:00}) and global
(\cite{SeIs:00,ChHu:05,XiDi:07}) settings. Most of these works
(excluding \cite{Kha:94}) address the robust regulator problem using
the regulator equations. The semi-global and global results mentioned
above are obtained for plants that are single-input single-output
(SISO) from control input to output; the recent works \cite{PiHu:11,
IsMa:12, AsIsMaPr:13} consider nonlinear multi-input multi-output
plants. All the aforementioned nonlocal results are derived for plants
that possess additional properties (such as minimum-phase, feedback
linearizable, etc). Another global result is in \cite{JaWe:09}, where
a rather restrictive regulator problem was solved assuming that the
plant is passive. Related results for fully actuated mechanical
systems with unknown parameters are in \cite{JaWe:08}. Using ideas
from the nonlinear regulator theory, a novel method for the
stabilization and adaptive control of nonlinear systems, called
immersion and invariance, was developed in \cite{AsOr:03}. For a
comprehensive introduction to the nonlinear regulator problem, see
\cite{ByPrIs:97,Hua:04,Isi:95}.

In this paper we focus on {\em finding a minimal order (or at least,
low order) controller that solves the nonlinear error feedback
regulator problem in a local setting}.  Specifically, assuming a
locally stable nonlinear plant that is SISO from control input to
output (but it may also have disturbance inputs) and a Lyapunov
stable nonlinear exosystem whose states are nonwandering, connected
as shown in Figure 2, we will construct controllers whose order is
the same as that of a certain immersion of the exosystem. In many
applications, this immersion is just a copy of the exosystem. The
order of the controllers in the papers mentioned earlier, even in
the absence of uncertain plant parameters, is typically equal to or
larger than that of the exosystem and the plant combined. A key reason
for this, apart from the fact that the control objectives in those
papers are different, is the sequential control design approach they
adopt. In their approach, first an internal model (whose order can be
larger than that of the exosystem) is designed and then the loop
containing the internal model and the plant is stabilized using an
additional controller. In contrast we will design a stabilizing
internal model directly. The search for minimal order controllers is
of practical value from an implementation standpoint, and is also of
theoretical interest. For a discussion on the lower bound for the
order of any controller that solves the linear robust regulator
problem see \cite{DaGo:75,DeWa:78}.

Our work is motivated by an alternate approach to the linear regulator
problem, first proposed in \cite{Dav:76} for stable finite-dimensional
linear plants. In this approach, the control and observation operators
of the internal model (whose order is $r\times m$, where $r$ is the
number of outputs and $m$ is the exosystem order) are chosen to ensure
that its closed-loop system with the plant is stable. Hence no
additional stabilizing controller is required and this may lead to the
construction of minimal order robust controllers for the linear
regulator problem.. This approach was extended in \cite{HaPo:00,
ReWe:03} to construct finite-dimensional controllers that solve the
regulator problem for stable linear infinite-dimensional plants. We
will extend this approach to nonlinear finite-dimensional plants that
are SISO from control input to output. Preliminary versions of this
work have appeared in \cite{NaWe:14,WeNa:14}, see Remark \ref{CDC}.

In Section \ref{sec2} we introduce the nonlinear error feedback
regulator problem and present relevant background information.  This
includes Definitions \ref{autonomous} and \ref{immersion} which
describe the notions of autonomous systems and their local immersions.
We then state Theorem \ref{IsBy}, a well-known result about the
connection between the solvability of the local error feedback
regulator problem and the local solvability of the regulator
equations.  This theorem is somewhat more general and precise than
other versions in the literature, see Remark
\ref{Putin_occupies_Crimea}. Finally we state Proposition \ref{ReWe}
about the linear error feedback regulator problem, which follows from
the results in \cite{HaPo:00}.

In Section \ref{sec3}, which is for technical preparation, we state a
version of the classical center manifold theorem. Using this theorem
we prove Proposition \ref{Haragus} which shows that for certain
autonomous systems, given a local immersion, there exists a possibly
lower order local immersion with special spectral properties.

Section \ref{sec4} contains our main result, Theorem \ref{newmain},
which states the existence of a low order controller under some
natural assumptions. This controller contains a suitable local
immersion of an autonomous system constructed using the exosystem and
a solution of the regulator equations. The order of the controller is
equal to the order of the immersion. The input map of the controller
is linear. Under some detectability assumptions on the linearized
plant and linearized exosystem, this controller is of minimal order,
see Corollary \ref{nlDcon}.

Section \ref{sec5} contains three examples. In the first example we
present a nonlinear exosystem, the state operator of whose
linearization at the origin contains a non-trivial Jordan block.  We
show that this exosystem is Lyapunov stable and all its trajectories
are periodic. We then illustrate our controller design using this
exosystem and a second-order plant. The second example shows that in
some cases there must exist a local immersion for the exosystem, with
order larger than that of the exosystem, for the regulator problem to
have a solution. We then present an immersion such that the controller
constructed using it is of minimal order. The third example is output
voltage regulation for the boost power converter shown in
Figure\m\m\m\-1. A constant but unknown input voltage $v>0$ is
transformed into a higher voltage $z_1$ that feeds a load $R$. Due to
the fast switching, there will be high frequency ripple on $z_1$,
which becomes negligible for very high switching frequency, and we
neglect this ripple. The control problem is to make $z_1$ track a
reference value, in spite of the sinusoidal disturbance current
$i_e$. After putting this system into our framework, we shall see that
the regulator equations reduce to a first order PDE with no given
boundary conditions, but which nevertheless has a locally unique
solution. \vspace{-6.5mm}

\begin{center}
\includegraphics[scale=0.65]{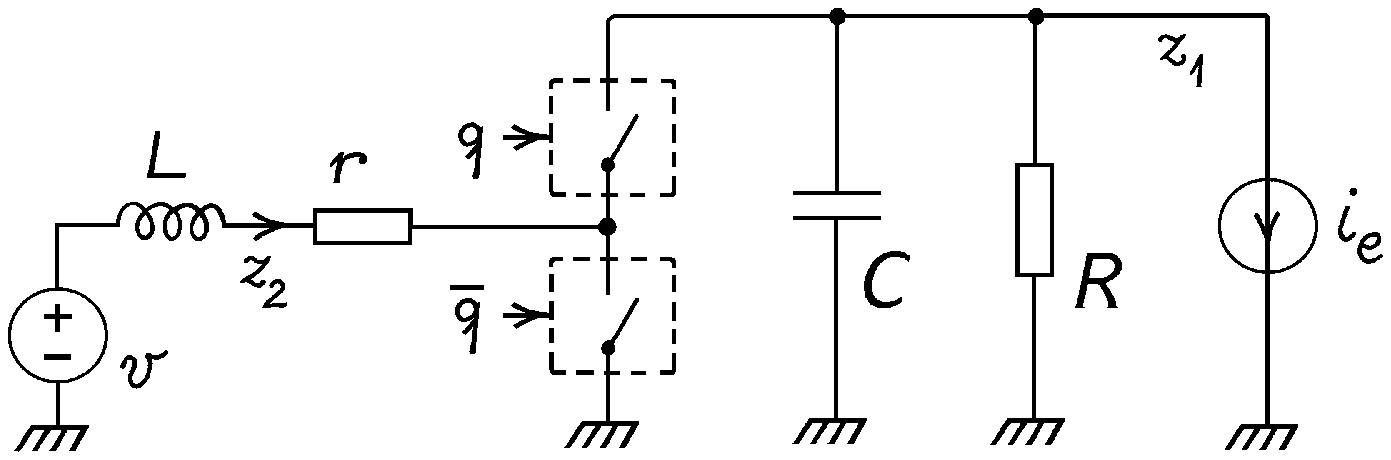} \vspace{-4mm}

{Figure $1$. A boost converter with input voltage $v$, input current
$z_2$, output voltage $y=z_1$ and disturbance current $i_e$. The
electronic switches are controlled by the binary signals $q$ and
$\bar q=1-q$, generated by a controller. \vspace{-4mm}}
\end{center}

\section{Problem setting and background} \label{sec2}
\setcounter{equation}{0} 

\ \ \ In this section, following \cite{Isi:95}, we introduce the
nonlinear plant and exosystem in Subsection 2.1 and define the error
feedback regulator problem for them in Subsection 2.2. We present the
definition of autonomous systems and of their local immersions in
Subsection 2.3 and then restate a well-known result from \cite{Isi:95}
which characterizes the solvability of the regulator problem, but with
added details about the smoothness of certain maps. Finally,
considering a linearization of the plant and the exosystem, we present
a simple proposition in Subsection 2.4 which follows immediately from
the results in \cite{Dav:76,HaPo:00}.

{\large\bf 2.1 The plant and the exosystem.} \m
We consider a nonlinear {\em plant} \vspace{-2mm}
\BEQ{nlplant}
  \dot x \m=\m f(x,u,w) \m, \qquad y \m=\m g(x,u,w) \m, \vspace{-2mm}
\end{equation}
with state $x(t)\in X\subset\rline^n$, control input $u(t)\in U\subset
\rline$ and output $y(t)\in Y\subset\rline$, where $X$, $U$ and $Y$
are open sets that contain the origin of the appropriate spaces. The
exogenous disturbance signal $w$ in \rfb{nlplant} is the state of the
nonlinear {\em exosystem} \vspace{-2mm}
\BEQ{nlexo}
  \dot w \m=\m s(w) \m, \vspace{-2mm}
\end{equation}
with $w(t)\in W\subset\rline^p$, where $W$ is an open set containing
the origin. The functions $f:X\times U \times W\to\rline^n$, $g:X
\times U\times W\to Y$ and $s:W\to W$ are assumed to be of class $C^2$
with $f(0,0,0)=0$, $g(0,0,0)=0$ and $s(0)=0$.
The {\em reference signal} is \vspace{-2.5mm}
\BEQ{Rajapaksa}
  y_r \m=\m q(w) \m. \vspace{-2mm}
\end{equation}
The function $q:W\to Y$ is assumed to be of class $C^2$ with $q(0)=0$.
The {\em tracking error} is \vspace{-2mm}
\BEQ{error}
  e(t) \m=\m y(t)-y_r(t) \m=\m g(x,u,w)-q(w) \m=\m h(x,u,w) \m.
\end{equation}
Clearly $h:X\times U\times W \to Y_e$ is a $C^2$ map satisfying
$h(0,0,0)=0$. Here $Y_e\subset\rline$ is open and $Y\subset Y_e$.
While we consider only SISO plants, the controller design procedure
described in this work also applies to MIMO plants. However, it is
unclear if the resulting controller will be of minimal order. This
is a topic for future research.

For the dynamics of the exosystem, we assume that the origin is a
Lyapunov stable equilibrium point and also that every point $w_0\in W$
is nonwandering. If $\phi$ denotes the flow generated by $s$ on $W$,
then a point $w_0\in W$ is called {\em nonwandering} if its trajectory
$\phi_t(w_0)$ is defined for all $t>0$ and for every neighborhood
$\Oscr$ of $w_0$ and every $T>0$, there exists $\tau>T$ such that
$\phi_{\tau}\m(\Oscr)\cap\Oscr$ is not empty (see \cite [Definition
5.2.2, page 236]{GuHo:83}). Equivalently, for each $w_0\in W$ there
exist sequences $(t_n)_{n=1}^\infty$ in $(0,\infty)$ and $(w_n)_{n=1}
^\infty$ in $W$ such that $\phi_{t_n}(w_n)$ exists and \vspace{-2mm}
\BEQ{Simy_teeth_operation}
   t_n\rarrow\infty\m,\qquad w_n\rarrow w_0\qquad \mbox{and} \qquad
   \phi_{t_n}(w_n)\rarrow w_0 \m. \vspace{-2mm}
\end{equation}
Denoting $\tilde t_n=-t_n$, $\tilde w_n=\phi_{t_n}(w_n)$, it follows
that $\phi_{\tilde t_n}(\tilde w_n)=w_n$ exists and \vspace{-2mm}
\BEQ{Bolt_won_200m}
   \tilde t_n\rarrow-\infty,\qquad \tilde w_n\rarrow w_0\qquad
   \mbox{and} \qquad \phi_{\tilde t_n}(\tilde w_n)\rarrow w_0 \m,
   \vspace{-2mm}
\end{equation}
which is exactly the same property as \rfb{Simy_teeth_operation}
but backwards in time. We mention that \cite{Isi:95,IsBy:90} make a
slightly stronger assumption about the exosystem: every point $w_0
\in W$ is assumed to be {\em Poisson stable}, which is like
\rfb{Simy_teeth_operation} with $w_n=w_0$, together with
\rfb{Bolt_won_200m} with $\tilde w_n=w_0$.

We define the real matrices \vspace{-2mm}
$$ A \m=\m \left[\frac{\partial f}{\partial x}\right]_{(0,0,0)},\ \
   B \m=\m \left[\frac{\partial f}{\partial u}\right]_{(0,0,0)},\ \
   P \m=\m \left[\frac{\partial f}{\partial w}\right]_{(0,0,0)},$$
$$ C \m=\m \left[\frac{\partial h}{\partial x}\right]_{(0,0,0)},\ \
   D \m=\m \left[\frac{\partial h}{\partial u}\right]_{(0,0,0)},\ \
   Q \m=\m \left[\frac{\partial h}{\partial w}\right]_{(0,0,0)}, \ \
   S \m=\m \left[\frac{\partial s}{\partial w}\right]_{(0)}.$$
The assumptions on $s$ imply that $S$ has all its eigenvalues on the
imaginary axis (see Remark 8.1.1 in \cite{Isi:95} for a closely
related statement, with practically the same proof). If $s$ is linear,
then the assumptions on $s$ also imply that $S$, when written in the
Jordan normal form, has no Jordan blocks of order larger than 1.
However, for a nonlinear $s$, $S$ can have nontrivial Jordan blocks;
for instance, see the exosystem in Example \ref{exmp1}. Our main
result is stated under the following assumption, which can be relaxed
(this is discussed in Remark \ref{relax}). For any square matrix $M$,
let $\sigma_p(M)$ denote the set of eigenvalues of $M$ and let $\rho(M)
=\{a\in\cline\m\big|\m a\notin\sigma_p(M)\}$.

\begin{assumption} \label{stab}
The matrix $A$ is stable (i.e. $\Re\l<0$ for all $\l\in\sigma_p(A)$).
\end{assumption}

The above assumption is natural given the fact that this work, as
discussed in the Introduction, builds on the results in
\cite{Dav:76,HaPo:00,ReWe:03} (see Proposition \ref{ReWe}) which have
been developed for exponentially stable plants. We will need the
following linearization at $(x,u,w)=(0,0,0)$ of the plant
\rfb{nlplant} and the error \rfb{error}: \vspace{-2mm}
\BEQ{plant} \m\ \ \ \ \ \dot x \m=\m Ax+Bu+Pw \m, \vspace{-2mm}
\end{equation}
\BEQ{linerror}
   \m\ \ \ \ \ e_l \m=\m Cx+Du+Qw \m.
\end{equation}
The transfer function of the linear system \rfb{plant}, \rfb{linerror}
from the control input $u$ to the linearized error $e_l$ is $\GGG(z)=
C(zI-A)^{-1}B+D$, which is defined for each $z\in\rho(A)$.

\begin{remark} \label{CDC}
This work generalizes the results in \cite{NaWe:14,WeNa:14} by relaxing
the assumption in those papers that the pair $\left(\sbm{C & Q},\sbm{A
& P\\ 0 &S} \right)$ be detectable. This higher generality makes the
main result and its proof much more complicated. In particular, we have
to consider local immersions of autonomous systems (see Definitions
\ref{autonomous} and \ref{immersion}) and derive new results for them
using the center manifold theorem (see Proposition \ref{Haragus}).
Furthermore, the exosystem in \cite{NaWe:14,WeNa:14} was linear with
only simple eigenvalues, but these conditions are dropped in the
present work, at the expense of new, more complex arguments. Letting
$S$ to contain non-trivial Jordan blocks allowed us to explore an
interesting and, to the best of our knowledge, novel structure for the
exosystem in Example \ref{exmp1}. Finally, two other examples from
\cite{NaWe:14,WeNa:14} have been further refined and expanded in this
work. \end{remark}

{\large \bf 2.2 The error feedback regulator problem.} \m
Consider the controller \vspace{-2.5mm}
\BEQ{gencon}
  \m\ \ \dot\xi \m=\m \eta(\xi,e) \m, \qquad u \m=\m \theta(\xi) \m,
  \vspace{-1mm}
\end{equation}
with state $\xi(t)\in X_c\subset\rline^{n_c}$, input $e$ and output
$u$, where $X_c$ is open and contains $0$. The functions $\eta:X_c
\times Y_e\to \rline^{n_c}$ and $\theta:X_c\to U$ are $C^2$ maps
satisfying $\m \eta(0,0)=0$ and $\theta(0)=0$. The closed-loop system
consisting of the plant \rfb{nlplant}, the exosystem \rfb{nlexo} and
the controller \rfb{gencon} is shown in Figure 2. We define the
matrices \vspace{-2mm}
\BEQ{shooting_in_Brussels}
   F \m=\m \left[\frac{\partial\eta}{\partial \xi}\right]_{(0,0)},\ \
   G \m=\m \left[\frac{\partial\eta}{\partial e}\right]_{(0,0)},\ \
   K \m=\m \left[\frac{\partial\theta}{\partial \xi}\right]_{(0)} \m,
   \vspace{-2mm}
\end{equation}
which determine the linearization of the controller. The {\em order}
of this controller is, by definition, $n_c$.

\begin{center} \vspace{-6mm}
\includegraphics[scale=0.53]{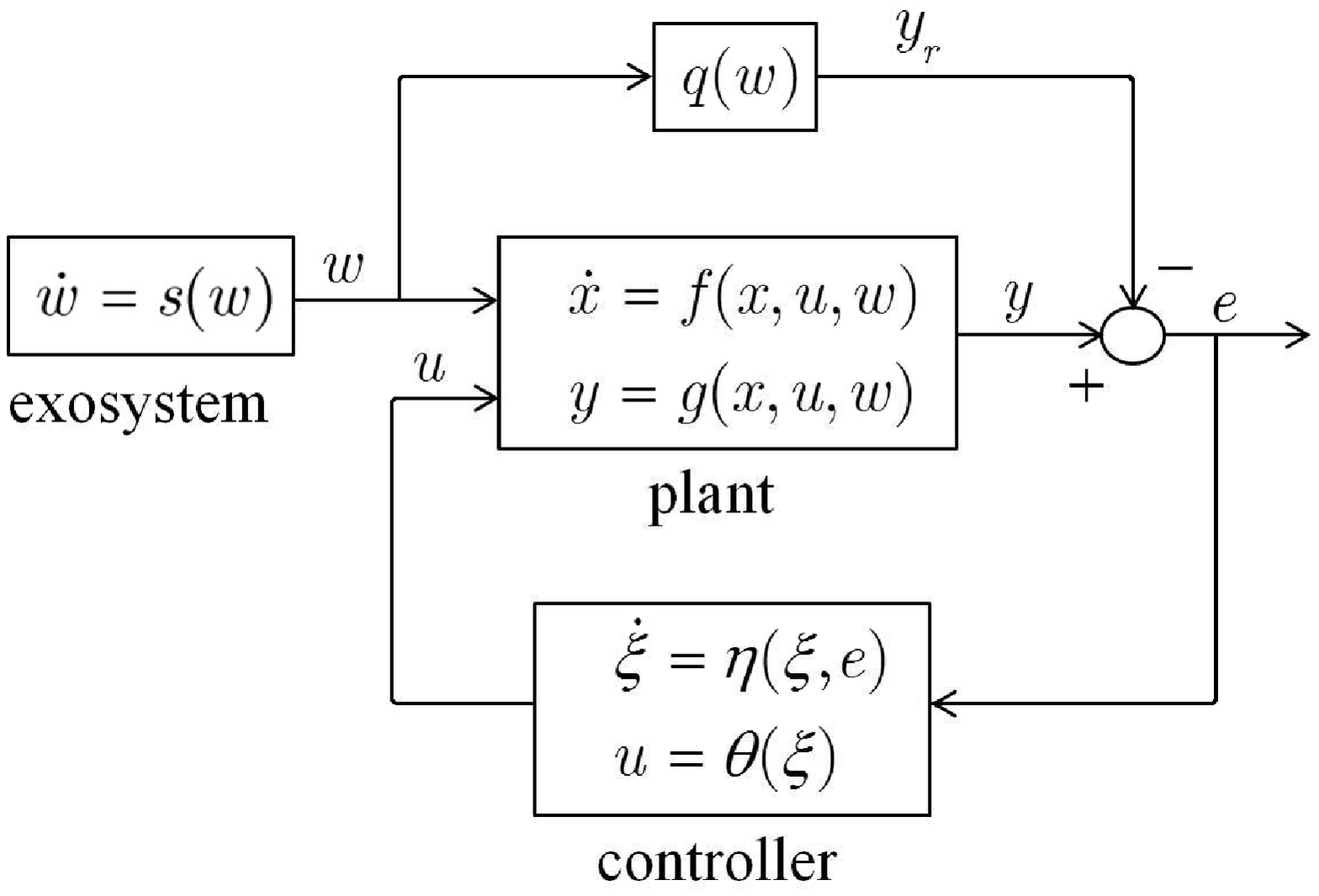} \vspace{-1mm}

{Figure 2. The closed-loop system of the plant and the controller,
driven\\ by the exosystem}
\end{center}

\begin{definition} \label{regprob}
The controller \rfb{gencon} is said to solve the {\em local error
feedback regulator problem} for the plant \rfb{nlplant}, the exosystem
\rfb{nlexo} and the error \rfb{error} if: \vspace{-2mm}
\begin{enumerate}
\item The equilibrium $(x,\xi)=(0,0)$ of the unforced closed-loop
system \vspace{-2mm}
$$ \dot x \m=\m f(x,\theta(\xi),0) \m, \qquad \dot\xi \m=\m
   \eta(\xi,h(x,\theta(\xi),0)) \m, \vspace{-2mm}$$
is locally exponentially stable. \vspace{-1mm}
\item The forced closed-loop system \vspace{-2mm}
\BEQ{Hanna_reg_to_Technion}
   \dot x \m=\m f(x,\theta(\xi),w) \m,\qquad \dot w \m=\m s(w) \m,
   \qquad \dot\xi \m=\m \eta(\xi,h(x,\theta(\xi),w)) \m, \vspace{-2mm}
\end{equation}
is such that for each initial condition $(x(0),\xi(0),w(0))$ in some
fixed neighborhood of $(0,0,0)$ in $X\times X_c\times W$, it has a
unique global (in time $t\geq 0$) solution $(x,\xi,w)$ and the
corresponding error $e$ defined in \rfb{error} satisfies \vspace{-2mm}
$$  \lim_{t\to+\infty}e(t) \m=\m \lim_{t\to+\infty} h(x(t),\theta
  (\xi(t)),w(t)) \m=\m 0 \m. $$
\end{enumerate}
\end{definition}


{\large \bf 2.3 The regulator equations.} \m
Lemma \ref{lm:IsBy} and Theorem \ref{IsBy}, presented below, are
well-known results in the nonlinear regulator theory. They were
first established in \cite{IsBy:90} under certain detectability
assumptions on the linearization of the plant, the error and the
exosystem. A more general version of Theorem \ref{IsBy} (along with
Definition \ref{immersion}) was then presented in \cite{Isi:95}.
Lemma \ref{lm:IsBy} and Theorem \ref{IsBy} do not require that $A$
be stable. Also, these results are derived in \cite{Isi:95} for
multi-input multi-output plants. For some comments on these
results, see Remark \ref{Putin_occupies_Crimea}.

\begin{lemma} \label{lm:IsBy}
Suppose that for the controller \rfb{gencon} the first condition in
Definition {\rm\ref{regprob}} (local exponential stability) holds.
This controller solves the local error feed\-back regulator problem
for the plant \rfb{nlplant}, the exosystem \rfb{nlexo} and the error
\rfb{error} if and only if there exist an open set $W^o\subset W$
containing zero and $C^2$ maps $\pi:W^o\to X$ and $\sigma:W^o\to
X_c$, with $\pi(0)=0$ and $\sigma(0)=0$, satisfying for every $w\in
W^o$ \vspace{-1mm}
\begin{align}
  &\frac{\partial\pi}{\partial w}s(w) \m=\m f(\pi(w),\theta(\sigma
  (w)),w) \m, \label{lm1}\\
  &\frac{\partial\sigma}{\partial w}s(w) \m=\m \eta(\sigma(w),0) \m,
  \label{lm2}\\
  & h(\pi(w),\theta(\sigma(w)),w) \m=\m 0 \m. \label{lm3}
\end{align}

\vspace{-2mm}
Moreover, in this case there exist open sets $Z\subset X\times X_c$
and $W^{oo}\subset W^o$, both containing zero, such that for any
$(x_0,\xi_0)\in Z$ and any $w_0\in W^{oo}$, the closed-loop system
\rfb{Hanna_reg_to_Technion} has a unique global (in time $t\geq 0$)
solution $(x,\xi,w)$ with $x(0)=x_0$, $\xi(0)=\xi_0$, $w(0)=w_0$ and
\BEQ{lemma_limit}
  \lim_{t\rarrow\infty} \|x(t)-\pi(w(t))\| \m=\m 0 \m,\qquad
  \lim_{t\rarrow\infty} \|\xi(t)-\sigma(w(t))\| \m=\m 0 \m.
\end{equation}
\end{lemma}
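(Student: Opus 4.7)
The plan is to apply the center manifold theorem (in the form stated in Section \ref{sec3}) to the augmented autonomous system obtained by coupling the forced closed-loop system \rfb{Hanna_reg_to_Technion} with the exosystem \rfb{nlexo}. At the origin the linearization splits into an $(x,\xi)$-block that is stable by the first condition of Definition \ref{regprob} and a $w$-block governed by $S$, whose spectrum lies entirely on the imaginary axis. The theorem then produces a locally invariant $C^2$ manifold $M=\{(\pi(w),\sigma(w),w)\m|\m w\in W^o\}$ on some open neighborhood $W^o$ of $0$ in $W$, with $\pi(0)=0$ and $\sigma(0)=0$, together with exponential attractivity of $M$ on a neighborhood of the origin; this attractivity immediately yields \rfb{lemma_limit} in both directions of the argument and also supplies the open sets $Z$ and $W^{oo}$ in the ``moreover'' claim.

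For the ``only if'' direction, invariance of $M$ under the closed-loop flow translates into \rfb{lm1} together with the identity $\frac{\partial\sigma}{\partial w}s(w)=\eta(\sigma(w),v(w))$, where $v(w):=h(\pi(w),\theta(\sigma(w)),w)$. Once $v\equiv 0$ on $W^o$ is shown, this identity reduces to \rfb{lm2} and \rfb{lm3} is obtained. To prove $v\equiv 0$, observe that the solution of the closed-loop system starting at $(\pi(w_0),\sigma(w_0),w_0)$ stays on $M$ by invariance, hence its error equals $v(\phi_t(w_0))$; the regulator assumption therefore gives $v(\phi_t(w_0))\to 0$ as $t\to\infty$ for every $w_0$ in a neighborhood of $0$. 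For an arbitrary $w^*\in W^o$, use the backwards-in-time recurrence \rfb{Bolt_won_200m}: take $\tilde w_n\to w^*$ and $\tilde t_n\to-\infty$ with $u_n:=\phi_{\tilde t_n}(\tilde w_n)\to w^*$, so the forward trajectory of each $u_n$ reaches $\tilde w_n$ at the large time $|\tilde t_n|$. A continuity-plus-uniformity argument (enabled by Lyapunov stability of the exosystem origin, which confines the relevant orbits to a common compact set) then upgrades the per-orbit decay $v(\phi_t(u_n))\to 0$ into the diagonal statement $v(\tilde w_n)\to 0$, and continuity of $v$ forces $v(w^*)=0$.

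For the ``if'' direction, equations \rfb{lm1}--\rfb{lm3} make $M$ invariant under the augmented flow: starting on $M$, one has $\dot x=f(\pi(w),\theta(\sigma(w)),w)=\frac{\partial\pi}{\partial w}s(w)=\frac{d}{dt}\pi(w)$ by \rfb{lm1}, while \rfb{lm3} kills the argument of $h$ in $\eta(\sigma(w),h(\pi(w),\theta(\sigma(w)),w))$ and \rfb{lm2} then gives $\dot\xi=\frac{d}{dt}\sigma(w)$. Thus $M$ is a $C^2$ center manifold of the augmented system, the error vanishes on it by \rfb{lm3}, and exponential attractivity (from the spectral splitting) yields \rfb{lemma_limit} and in particular $e(t)\to 0$, establishing condition 2 of Definition \ref{regprob}.

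The hardest step is the nonwandering-based proof of $v\equiv 0$: one must justify passing from the pointwise convergence $v(\phi_t(w))\to 0$, valid for each fixed $w$ near $0$, to a statement at the moving pair $(u_n,|\tilde t_n|)$ with $|\tilde t_n|\to\infty$. This is where the relaxation from Poisson stability (used in \cite{Isi:95,IsBy:90}) to the nonwandering assumption \rfb{Simy_teeth_operation}--\rfb{Bolt_won_200m} requires genuinely new reasoning; the other ingredients are packaged versions of the center manifold theorem recalled in Section \ref{sec3}.
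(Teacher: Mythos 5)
The paper itself does not prove this lemma: it is quoted from Isidori--Byrnes and \cite{Isi:95}, with the remark following it indicating only that the limits \rfb{lemma_limit} come from center manifold theory. Judged against the standard proof, your architecture is the right one: apply the center manifold theorem to the augmented $(x,\xi,w)$-system, whose linearization has a stable $(x,\xi)$-block and a center $w$-block; local invariance of the graph $M=\{(\pi(w),\sigma(w),w)\}$ yields \rfb{lm1} together with $\frac{\partial\sigma}{\partial w}s(w)=\eta(\sigma(w),v(w))$ where $v(w)=h(\pi(w),\theta(\sigma(w)),w)$; exponential attractivity of $M$ (no unstable spectrum) yields \rfb{lemma_limit} and reduces everything to showing $v\equiv 0$ near $0$; and the converse direction is the routine invariance computation you describe.

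The genuine gap is exactly at the step you yourself flag as the hardest, and the fix you sketch does not close it. You need to pass from the pointwise decay $v(\phi_t(u))\to 0$ (valid for each fixed $u$ near $0$) to $v(\phi_{t_n}(u_n))\to 0$ along the nonwandering sequences $u_n\to w^*$, $t_n\to\infty$. Your justification is ``continuity plus uniformity, enabled by Lyapunov stability, which confines the orbits to a common compact set.'' But confinement to a compact set does \emph{not} make the pointwise convergence uniform: the family of continuous functions $u\mapsto v(\phi_t(u))$ converges to $0$ pointwise as $t\to\infty$, and pointwise convergence on a compact set is not uniform without an extra ingredient (equicontinuity in $t$, monotonicity, etc.), none of which is available here. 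Concretely, the settling time $T(u)=\inf\{T:\m |v(\phi_t(u))|\le\e \ \forall t\ge T\}$ is finite for each $u$ but need not be locally bounded, so nothing prevents $t_n\le T(u_n)$ for all $n$, which is precisely the scenario your diagonal argument must exclude. This is not a technicality: it is the entire content of relaxing Poisson stability (where one takes $u_n=w^*$ and concludes $v(w^*)=\lim v(\phi_{t_n}(w^*))=0$ in one line) to the nonwandering hypothesis.

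Two ways to actually close it. (i) Baire category: suppose $v(w^*)=c>0$ and let $\Oscr$ be a neighborhood of $w^*$ on which $v>c/2$. The sets $F_k=\{u:\m |v(\phi_t(u))|\le c/4 \ \forall t\ge k\}$ are closed and cover a neighborhood of $0$, so by Baire some $F_k$ has interior meeting $\Oscr$; pick $w'$ in that interior, so the decay \emph{is} uniform on a neighborhood $N_{w'}\subset\Oscr$ of $w'$ for $t\ge k$. Applying the nonwandering property at $w'$ gives $u_n\in N_{w'}$ and $t_n\ge k$ with $\phi_{t_n}(u_n)\to w'$, whence $|v(\phi_{t_n}(u_n))|\le c/4$ while continuity forces $|v(\phi_{t_n}(u_n))|\to v(w')>c/2$, a contradiction. (ii) Birkhoff's center theorem: if every point of a compact invariant set is nonwandering, the Poisson stable points are dense in it; the classical one-line argument gives $v=0$ on that dense set, and continuity finishes. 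Either route should be inserted where you currently invoke ``continuity-plus-uniformity''; the rest of your proof is sound.
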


\begin{remark}
The statement of the above lemma in \cite{Isi:95} does not describe
the asymptotic response of the plant and the controller states
expressed by the limits in \rfb{lemma_limit}. These limits can be
obtained using the center manifold theory \cite{Car:81} which
guarantees the existence of a locally attractive invariant manifold
$\{(\pi(w),\sigma(w),w)|$ $w\in W^{oo}\}\subset Z\times W^{oo}$ for
the dynamics of the closed-loop system \rfb{Hanna_reg_to_Technion}.
\end{remark}

\begin{definition} \label{autonomous}
An {\em autonomous system} is a triple $(W_1,s_1,\g_1)$, where $W_1
\subset\rline^{r_1}$ is an open set containing the origin and the maps
$s_1:W_1\to\rline^{r_1}$ and $\g_1:W_1\to\rline$ are both of class
$C^2$ and they satisfy $s_1(0)=0$ and $\g_1(0)=0$. The {\em order} of
this system is $r_1$. The state trajectories $w_1$ and the
corresponding output functions $u_1$ of $(W_1,s_1,\g_1)$ are the
solutions (on any time interval) of the equations \vspace{-2mm}
\BEQ{aut_sys}
   \dot w_1 \m=\m s_1(w_1) \m, \qquad u_1 \m=\m \g_1(w_1)
   \vspace{-2mm}
\end{equation}
with $w_1(t)\in W_1$ and $u_1(t)\in\rline$. \vspace{-1.5mm}
\end{definition}

Note that the triple $(W,s,q)$ from \rfb{nlexo} and \rfb{Rajapaksa}
is an autonomous system.

\begin{definition} \label{immersion}
An autonomous system $(W_1,s_1,\g_1)$ is said to be {\em locally
immersed} into another autonomous system $(W_2,s_2,\g_2)$ if there
exists an open set $\widetilde W_1\subset W_1$ containing the origin
and a $C^2$ map $\tau:\widetilde W_1\to W_2$ such that $\tau(0)=0$
and \vspace{-2mm}
$$ \frac{\partial\tau}{\partial w_1}s_1(w_1) \m=\m s_2(\tau(w_1)) \m,
   \qquad \g_1(w_1) \m=\m \g_2(\tau(w_1)) \FORALL w_1\in
   \widetilde W_1 \m. \vspace{-2mm} $$
\end{definition}

For systems as above, all the output functions of \m $(\widetilde W_1,
s_1,\g_1)$ are also output functions of \m $(W_2,s_2,\g_2)$. However,
the eigenvalues of the linearization of $s_1$ need not be eigenvalues
of the linearization of $s_2$, see Example \ref{exmp2}.

\begin{theorem} \label{IsBy} {\bf (Isidori and Byrnes)}
There exists a controller of the form \rfb{gencon} that solves the
local error feedback regulator problem for the plant \rfb{nlplant},
the exosystem \rfb{nlexo} and the error \rfb{error} if and only if
there exist an open set $W^o\subset W$ with $0\in W^o$ and $C^2$
maps $\pi:W^o\to X$ and $\g:W^o\to U$, with $\pi(0)=0$ and $\g(0)=0$,
\vspace{-2mm} such that:
\begin{enumerate}
\item For each $w\in W^o$, $\pi$ and $\gamma$ satisfy the nonlinear
{\em regulator equations} \vspace{-1mm}
\begin{align}
&\frac{\partial\pi}{\partial w}s(w) \m=\m f(\pi(w),\g(w),w) \m,
\label{nlreg1}\\[1mm] & h(\pi(w),\g(w),w) \m=\m 0 \m. \label{nlreg2}
\end{align}
\vspace{-9mm}
\item The autonomous system $(W^o,s,\g)$ is locally immersed into
$(E^o,\phi,\l)$, and the linearization of the latter system, $\Phi=
\left[\frac{\partial\phi}{\partial\zeta}\right]_{(0)}$ and $\L=
\left[\frac{\partial\l}{\partial\zeta}\right]_{(0)}$, is such that
for some column vector $N$ (of suitable dimension),  \vspace{-1mm}
\BEQ{imm_stab_det}
   \left(\bbm{A & B\L\\ NC & \Phi+N D\L},\bbm{B \\ 0},\bbm{C & D\L}
   \right) \mbox{is stabilizable and detectable} \m.
\end{equation}
\end{enumerate}
\end{theorem}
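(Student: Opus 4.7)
The plan is to use Lemma \ref{lm:IsBy} as the bridge between existence of a regulating controller and solvability of (\ref{lm1})--(\ref{lm3}), then translate those equations into the regulator equations together with an immersion having the required stabilizability/detectability property. Both directions fall into place once the right target of the immersion is identified.

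\textbf{Necessity.} Suppose a controller of the form (\ref{gencon}) solves the regulator problem. Its unforced closed loop is locally exponentially stable by Definition \ref{regprob}(1), so the hypothesis of Lemma \ref{lm:IsBy} holds and produces $C^2$ maps $\pi:W^o\to X$ and $\sigma:W^o\to X_c$ satisfying (\ref{lm1})--(\ref{lm3}). Setting $\gamma := \theta\circ\sigma$, equations (\ref{lm1}) and (\ref{lm3}) become precisely the regulator equations (\ref{nlreg1})--(\ref{nlreg2}). For the immersion I would take $(E^o,\phi,\lambda) := (X_c,\eta(\cdot,0),\theta)$ with $\tau := \sigma$: equation (\ref{lm2}) is exactly the intertwining identity in Definition \ref{immersion}, and $\gamma = \lambda\circ\tau$ holds by construction. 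The linearizations are $\Phi = F$, $\Lambda = K$; choosing $N := G$, the matrix appearing in (\ref{imm_stab_det}) is precisely the state matrix of the linearized unforced closed loop, which is Hurwitz by local exponential stability, hence a fortiori both stabilizable and detectable with respect to any input and output matrices.

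\textbf{Sufficiency.} Assume the regulator equations admit a solution $(\pi,\gamma)$ and that $(W^o,s,\gamma)$ is locally immersed into $(E^o,\phi,\lambda)$ via a map $\tau$, with (\ref{imm_stab_det}) holding for some vector $N$. By the classical observer-plus-state-feedback argument, the stabilizability and detectability asserted in (\ref{imm_stab_det}) produce a linear dynamic output feedback $(A_c,B_c,C_c)$ whose closed loop with the triple in (\ref{imm_stab_det}) is Hurwitz. I would then propose the controller
\begin{align*}
\dot\xi_1 &= \phi(\xi_1) + Ne, \\
\dot\xi_2 &= A_c\xi_2 + B_c e, \\
u &= \lambda(\xi_1) + C_c\xi_2,
\end{align*}
with state $\xi = (\xi_1,\xi_2)$. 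Its linearization at the origin reproduces exactly the stabilizing design above, so Definition \ref{regprob}(1) is satisfied. To conclude via Lemma \ref{lm:IsBy}, I would set $\sigma(w) := (\tau(w),0)$ and check (\ref{lm1})--(\ref{lm3}): the immersion identity yields $\frac{\partial\sigma}{\partial w}s(w) = (\phi(\tau(w)),0) = \eta(\sigma(w),0)$, while $\theta(\sigma(w)) = \lambda(\tau(w)) = \gamma(w)$ reduces (\ref{lm1}) and (\ref{lm3}) to the regulator equations.

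\textbf{Where the work lies.} Once Lemma \ref{lm:IsBy} is available, the algebraic content in each direction is light; the main obstacle is conceptual rather than computational. In necessity the key insight is that the controller's own zero-input dynamics already provide a valid target $(E^o,\phi,\lambda)$ for the immersion, so no separate existence argument is required. In sufficiency one must recognize that the block structure of (\ref{imm_stab_det}), with input $\bbm{B\\0}$ and output $(C,D\Lambda)$, is precisely the linearization of the ``plant plus internal model'' configuration used in the proposed controller; only then does the additional stabilizer $(A_c,B_c,C_c)$ produce a Hurwitz closed loop. A minor but necessary bookkeeping task is to shrink $W^o$, $\widetilde W_1$, $E^o$ and $X_c$ consistently so that the $\sigma$ constructed in each direction takes values in the domains on which all the maps are of class $C^2$.
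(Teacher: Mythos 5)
The paper does not prove Theorem \ref{IsBy} at all: it states it as a known result, refers to \cite{Isi:95}, and only sketches the sufficiency construction (internal model $(\phi,\l,N)$ plus a linear stabilizer $(K,L,M)$) in the text following the theorem. Your reconstruction is the standard Isidori--Byrnes argument and is consistent with that sketch: necessity via Lemma \ref{lm:IsBy}, taking the controller's zero-input dynamics $(X_c,\eta(\cdot,0),\theta)$ as the immersion target with $\tau=\sigma$ and $N=G$, so that the matrix in \rfb{imm_stab_det} becomes the Hurwitz closed-loop Jacobian $\Ascr_f$; sufficiency via the internal-model-plus-stabilizer controller, verified again through Lemma \ref{lm:IsBy} with $\sigma(w)=(\tau(w),0)$. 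The key identifications are the right ones and the necessity direction is complete.

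One step in sufficiency needs more care than you give it: the claim that the closed-loop linearization ``reproduces exactly the stabilizing design.'' When $D\neq 0$ it does not, because the internal model is driven by $e=Cx+D(\l(\xi_1)+C_c\xi_2)$, so the actual Jacobian is $\sbm{\mathcal{A} & \tilde{\mathcal{B}}C_c\\ B_c\mathcal{C} & A_c+B_cDC_c}$ with $\tilde{\mathcal{B}}=\sbm{B\\ ND}$, whereas the design extracted from \rfb{imm_stab_det} stabilizes the interconnection with input matrix $\sbm{B\\0}$; stabilizability of $\bigl(\mathcal{A},\sbm{B\\0}\bigr)$ does not by itself give stabilizability of $\bigl(\mathcal{A},\sbm{B\\ND}\bigr)$. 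The repair is cheap --- for instance, drive the internal model with $e-DC_c\xi_2$ instead of $e$, which leaves the verification of \rfb{lm2} untouched because the second component of $\sigma(w)=(\tau(w),0)$ vanishes, or restrict to $D=0$ as in \cite{Isi:95} (cf.\ Remark \ref{Putin_occupies_Crimea}) --- but as written the step fails for $D\neq0$. The paper's own displayed closed-loop matrix after the theorem has the same looseness, so this is a shared omission rather than a wrong approach; still, your proof should include the one line that fixes it.
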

\vspace{-2mm}

The intuitive meaning of the system \rfb{imm_stab_det} can be
understood from Proposition \ref{ReWe} below: it is the linearization
of the feedback interconnection of the plant \rfb{nlplant} with the
internal model $(E^o,\phi,\l)$, the latter receiving its input $e$ via
the matrix $N$.

Suppose that the conditions in Theorem \ref{IsBy} hold. Let the
matrices $K$, $L$ and $M$ be such that
$$\bbm{A & B\L & BM \\ NC & \Phi+N D\L & 0\\ L C & LD\L & K}$$
is exponentially stable. Then the controller \rfb{gencon} with
$$ \xi\m=\m \bbm{\xi_0&\xi_1}^\top\m, \qquad \eta(\xi,e) \m=\m
   \bbm{K\xi_0+Le\\ \phi(\xi_1)+Ne}\m, \qquad \theta(\xi) \m=\m
   M\xi_0+\l(\xi_1)\m,$$
solves the local error feedback regulator problem for the plant
\rfb{nlplant}, the exosystem \rfb{nlexo} and the error \rfb{error}
\cite{Isi:95}. A possible choice for $K$, $L$ and $M$, suggested by
the observer-based controller design approach, is to choose $L$ and
$M$ such that
$$ \bbm{A & B\L\\ NC & \Phi+N D\L}+\bbm{B\\0}M \qquad \textrm{and}
   \qquad \bbm{A & B\L\\ NC & \Phi+N D\L}-L\bbm{C & D\L}$$
are exponentially stable and then choose \vspace{-1mm}
$$ K \m=\m \bbm{A & B\L\\ NC & \Phi+N D\L} + \bbm{B\\0}M - L
   \bbm{C & D\L}. \vspace{-1mm}$$
In this design, the order of the controller (the dimension of $\xi$)
is $n_c=n+2\nu$, where $\nu$ is the order of $(E^o,\phi,\l)$.

\begin{remark} \label{Putin_occupies_Crimea}
The function $g$ in \rfb{nlplant} is considered in \cite{Isi:95} to be
independent of $u$ and thus $D=0$ in that work, which makes the
formulas simpler. However, the proof remains almost the same if we
include the matrix $D$. In \cite{Isi:95} there is a different
assessment of the smoothness of $\pi$ and $\gamma$ ($C^1$ in place of
$C^2$). The $C^2$ smoothness can be established by a similar argument.
\end{remark}

{\large \bf 2.4 Solution to the linear regulator problem.} \m
The following proposition is based on the results in \cite{Dav:76}
which address the linear error feedback regulator problem. Theorem 6
in \cite{HaPo:00} (and also \cite{ReWe:03}) extends the results in
\cite{Dav:76} to a class of infinite-dimensional systems. To simplify
our presentation, we state this proposition under the assumption that
$0\in\sigma_p(S)$, but the proposition remains true even when this
assumption is dropped. Recall the notation $\GGG$ introduced after
\rfb{linerror}.

\begin{proposition} \label{ReWe}
Suppose that $A$ is stable and $S$ has eigenvalues $\{\alpha_0,\pm i
\alpha_1,\ldots\pm i\alpha_q\}$, with $\alpha_0=0$ and each $\alpha_j
\in\rline$, such that the geometric multiplicity of every eigenvalue
is 1 and $\GGG(i\alpha_j)\neq 0$ for all $j\in\{0,1,\ldots q\}$. Let
the vector $C_c^\top\in\rline^{p}$ be such that the pair $(C_c,S)$ is
detectable (equivalently, observable). Then there exists a vector
$B_c\in\rline^p$ such that the linear controller \vspace{-2mm}

\BEQ{lincontr}
   \dot z_c \m=\m Sz_c + B_c e_l \m, \qquad u \m=\m C_c z_c \m,
   \vspace{-2mm}
\end{equation}
for the linearized plant \rfb{plant}, \rfb{linerror} renders the
closed-loop system stable, i.e. \vspace{-2mm}
\BEQ{Ascr}
  \Ascr \m=\m \bbm{A & B C_c\\ B_c C & S+B_c D C_c} \vspace{-1.5mm}
\end{equation}
is a stable matrix. For any such $B_c$, $C_c$, the controller in
\rfb{lincontr} solves the linear error feedback regulator problem
for the linearized plant \rfb{plant}, the linearization of the
exosystem \rfb{nlexo} and the linearized error \rfb{linerror},
and moreover in this case the linearized error $e_l(t)$ converges
to zero exponentially.
\end{proposition}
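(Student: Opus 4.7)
The plan is to produce the required $B_c$ via a perturbation argument on $\Ascr$ and then verify the tracking property by solving linear Sylvester-type equations, following \cite{Dav:76, HaPo:00, ReWe:03}. For the existence of $B_c$, I would set $B_c=\varepsilon B_c^0$ with $\varepsilon>0$ small and $B_c^0\in\rline^p$ to be selected, so that $\Ascr$ is a small perturbation of the block-triangular matrix $\bbm{A & BC_c \\ 0 & S}$, whose spectrum is $\sigma_p(A)\cup\sigma_p(S)$. The eigenvalues of $A$ remain in the open left half plane for small $\varepsilon$. Since $\sigma_p(A)\cap\sigma_p(S)=\emptyset$, the unperturbed left eigenvector for each $i\alpha_j\in\sigma_p(S)$ has the form $(0,w_j^*)$ with $w_j^*S=i\alpha_j w_j^*$, normalized so that $w_j^*v_j=1$ (where $Sv_j=i\alpha_j v_j$). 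A first-order perturbation calculation then gives
$$ \delta\l_j \m=\m \varepsilon\m w_j^* B_c^0\m\GGG(i\alpha_j)\m C_c v_j+O(\varepsilon^2) \m. $$
By observability of $(C_c,S)$ we have $C_c v_j\neq 0$, and $\GGG(i\alpha_j)\neq 0$ by hypothesis, so using biorthogonality of the left/right eigenvectors of $S$ one can select $B_c^0\in\rline^p$ to make each $\Re\delta\l_j<0$; for $\varepsilon$ small enough, $\Ascr$ is then exponentially stable.

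Given stability of $\Ascr$, I would verify the regulator property by seeking real matrices $\Pi,\Sigma$ such that the change of variables $\tilde x=x-\Pi w$, $\tilde z_c=z_c-\Sigma w$ removes the exogenous forcing from the closed-loop equations. A direct computation shows that this requires
$$ \Pi S \m=\m A\Pi+BC_c\Sigma+P\m, \qquad 0 \m=\m C\Pi+DC_c\Sigma+Q\m, $$
$$ \Sigma S \m=\m B_c C\Pi+(S+B_c DC_c)\Sigma+B_c Q\m; $$
substituting the algebraic constraint into the last equation reduces it to $\Sigma S=S\Sigma$. Since $S$ is diagonalizable over $\cline$ with geometric-multiplicity-one eigenvalues, $\Sigma v_j=c_j v_j$ for some scalars $c_j\in\cline$. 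The Sylvester equation for $\Pi$ then yields $\Pi v_j=(i\alpha_j I-A)^{-1}(c_j BC_c v_j+Pv_j)$, which is well defined since $i\alpha_j\in\rho(A)$, and the algebraic constraint becomes the scalar equation
$$ \GGG(i\alpha_j)\m c_j\m C_c v_j \m=\m -Qv_j-C(i\alpha_j I-A)^{-1}Pv_j\m, $$
uniquely determining $c_j$ from $\GGG(i\alpha_j)\neq 0$ and $C_c v_j\neq 0$. Conjugate eigenvalues give conjugate $c_j$, so $\Pi$ and $\Sigma$ may be taken real.

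In the new variables the closed-loop dynamics read $\frac{d}{dt}(\tilde x,\tilde z_c)^\top=\Ascr\m(\tilde x,\tilde z_c)^\top$ and $e_l=C\tilde x+DC_c\tilde z_c$, so stability of $\Ascr$ gives $\tilde x,\tilde z_c\rarrow 0$ exponentially and hence $e_l(t)\rarrow 0$ exponentially for every initial condition. The main obstacle is the existence step: simultaneously moving all the imaginary eigenvalues of $S$ into the open left half plane with a single $B_c$. The perturbation computation above conveys the idea, but carrying it out carefully (choosing $B_c^0$ for all $j$ at once, and controlling the higher-order terms uniformly) requires all three hypotheses ($A$ stable, $\GGG(i\alpha_j)\neq 0$, $(C_c,S)$ observable) to cooperate; the cleanest route in practice is to invoke Theorem 6 of \cite{HaPo:00} and specialize to our finite-dimensional setting, after which the verification of the regulator equations above is routine.
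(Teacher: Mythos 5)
Your overall strategy (perturb off the block-triangular matrix $\sbm{A & BC_c\\ 0 & S}$ for stabilization, then verify tracking via Sylvester equations) is the natural one and matches the route the paper takes by citing \cite{Dav:76,HaPo:00}; the paper gives no self-contained proof, only the explicit construction of $B_c$ in Remark \ref{clarify}. However, both halves of your argument silently assume that $S$ is diagonalizable, and that is not among the hypotheses: the proposition only assumes that the \emph{geometric} multiplicity of every eigenvalue of $S$ is $1$, which means one Jordan block per eigenvalue, possibly of algebraic multiplicity $m_j>1$. The paper goes out of its way to allow this (see the discussion before Assumption \ref{stab}, Example \ref{exmp1}, and the case $m_j>1$ treated in Remark \ref{clarify}). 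Your first-order formula $\delta\l_j=\e\m w_j^*B_c^0\m\GGG(i\alpha_j)\m C_cv_j+O(\e^2)$ with the normalization $w_j^*v_j=1$ is valid only for an algebraically simple eigenvalue: if $i\alpha_j$ sits in a Jordan block of size $m_j\geq 2$, then $w_j^*v_j=0$, the normalization is impossible, and a generic $O(\e)$ perturbation splits the eigenvalue into a cluster of size $O(\e^{1/m_j})$ whose leading behaviour is not captured by your formula. This is precisely why the construction in \cite[Theorem 6]{HaPo:00}, reproduced in Remark \ref{clarify}, scales the residue of order $k$ at $i\alpha_j$ by $\e^k$ rather than taking $B_c=\e B_c^0$; the gap is not bookkeeping, it is the reason the cited construction has the form it does.

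The same omission affects the second half. After correctly reducing the third Sylvester equation to $\Sigma S=S\Sigma$, you assert that $S$ is ``diagonalizable over $\cline$ with geometric-multiplicity-one eigenvalues'' and determine $\Sigma$ only on the eigenvectors $v_j$; when some $m_j\geq2$ these do not span $\cline^p$, so $\Sigma$ (hence $\Pi$) is not determined and you have verified only one of the $m_j$ scalar conditions coming from $C\Pi+DC_c\Sigma+Q=0$ on each Jordan chain. A repair that works in the stated generality, and avoids constructing $\Sigma$ by hand, is the following: since $\sigma_p(\Ascr)\cap\sigma_p(S)=\emptyset$, the coupled Sylvester equation $\sbm{\Pi\\ \Sigma}S=\Ascr\sbm{\Pi\\ \Sigma}+\sbm{P\\ B_cQ}$ has a unique solution; its second block row reads $\Sigma S-S\Sigma=B_cZ$ with $Z=C\Pi+DC_c\Sigma+Q$, and one shows $Z=0$ by working down each Jordan chain of $S$, using that stability of $\Ascr$ forces $w^*B_c\neq0$ for every left eigenvector $w^*$ of $S$ (otherwise $(0,w^*)$ would be a left eigenvector of $\Ascr$ with a purely imaginary eigenvalue) together with $\Ran(i\alpha_j I-S)=\{w\}^\perp$, which follows from the geometric multiplicity being $1$. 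With $Z=0$ the error equals $C\tilde x+DC_c\tilde z_c$ and decays exponentially, as you claim.
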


On the basis of the results in \cite{HaPo:00}, the next remark
states a concrete way of choosing $B_c$ in the above proposition
so that $\Ascr$ in \rfb{Ascr} is stable.

\begin{remark} \label{clarify}
For each $j\in\{0,1,\ldots q\}$, denote by $m_j$ the algebraic
multiplicity of $i\alpha_j\in\sigma_p(S)$. Then clearly $p=m_0
+2\sum_{j=1}^q m_j$. For each $j\in\{0,1,\ldots q\}$, choose the
set $\{a_{j1},a_{j2},\ldots a_{jm_j}\}\subset\cline$ such that
all the zeros of the polynomial $z^{m_j}+\GGG(i\alpha_j)\sum_
{k=1}^{m_j} a_{jk}z^{m_j-k}$ lie in the open left half-plane.
Since $\GGG(0)\in\rline$, we can choose $\{a_{01},a_{02},\ldots
a_{0m_0}\}\subset\rline$. According to \cite[Theorem 6]{HaPo:00},
if we choose $B_c$ such that the transfer function $\CCC$ of the
linear controller \rfb{lincontr} is given by \vspace{-2mm}
\BEQ{TF_HaPo}
 -\m \CCC(z) \m=\m \sum_{j=0}^q\sum_{k=1}^{m_j} \frac{a_{jk}\m\e^k}
 {(z-i\alpha_j)^k} + \sum_{j=1}^q\sum_{k=1}^{m_j}\frac{\bar a_{jk}
 \m\e^k}{(z+i\alpha_j)^k} \m, \vspace{-2mm}
\end{equation}
where $\e>0$ is a constant and a {\em bar} indicates a complex
conjugate, then for all $\e$ sufficiently small $\Ascr$ in
\rfb{Ascr} is stable.

Given $S$ and $C_c$, we now describe how $B_c$ can be chosen so
that the transfer function of \rfb{lincontr} is $\CCC$. Let $J$ be
an upper-triangular Jordan normal form for $S$, with the set of
Jordan blocks on its diagonal being $\{J_0,J_1,J_{-1},\ldots
J_{q},J_{-q}\}$ (in this order, $J_j$ and $J_{-j}$ correspond to
the eigenvalues $i\alpha_j$ and $-i \alpha_j$, respectively). Let
\vspace{-1mm}
$$ T \m=\m \bbm{V_0\ V_1\ V_{-1}\ \ldots V_{q}\ V_{-q}}\in
   \cline^{p\times p} \vspace{-1mm} $$
be the matrix of generalized eigenvectors of $S$ such that $J=
T^{-1}ST$, $V_0\in\rline^{p\times m_0}$ and $V_{-j}\in\cline^{p
\times m_j}$ is the element-wise complex conjugate of $V_j\in
\cline^{p\times m_j}$ for all $j\in\{1,2,\ldots q\}$. Define
$\Cscr_c=C_cT$. Then \vspace{-1mm}
$$ \Cscr_c \m=\m \bbm{\Cscr_0\ \Cscr_1\ \Cscr_{-1} \ldots \Cscr_q
   \ \Cscr_{-q}} \in \cline^{1\times p}\m, \vspace{-1mm} $$
where $\Cscr_0\in\rline^{1\times m_0}$ and $\Cscr_j\in\cline^{1
\times m_j}$ and $\Cscr_{-j}$ is the element-wise complex conjugate
of $\Cscr_j$ for each $j\in\{1,2,\ldots q\}$. We will construct
$\Bscr_c\in\cline^{p\times1}$ with the structure \vspace{-1mm}
$$ \Bscr_c^\top \m=\m \bbm{\Bscr_0^\top\ \Bscr_1^\top\ \Bscr_{-1}
   ^\top\ldots \Bscr_q^\top \ \Bscr_{-q}^\top} \m, \vspace{-1mm} $$
where $\Bscr_0\in\rline^{m_0\times 1}$ and $\Bscr_j\in\cline^{m_j
\times1}$ and $\Bscr_{-j}$ is the element-wise complex conjugate
of $\Bscr_j$ for each $j\in\{1,2,\ldots q\}$, such that
\vspace{-2mm}
\BEQ{TF_subsystem}
  \Cscr_j(zI-J_j)^{-1}\Bscr_j \m=\m \sum_{k=1}^{m_j} \frac{-a_{jk}
  \m\e^k}{(z-i\alpha_j)^k} \FORALL j\in\{0,1,\ldots q\}\m.
  \vspace{-2mm}
\end{equation}
Using \rfb{TF_HaPo} and \rfb{TF_subsystem} it follows that $\Cscr_c
(zI-J)^{-1}\Bscr_c=\CCC(z)$ for all $z\notin\sigma_p(S)$. If we let
$B_c=T\Bscr_c$, it is then easy to verify that $B_c\in\rline^{p
\times1}$ and $C_c(zI-S)^{-1}B_c=\CCC(z)$ for all $z\notin\sigma_p
(S)$, i.e. for this choice of $B_c$ the matrix $\Ascr$ is stable. It
now only remains to choose $\Bscr_j$ such that \rfb{TF_subsystem}
holds. This is done below.

Fix $j\in\{0,1,\ldots q\}$ and $\e>0$ sufficiently small. Let
$\Bscr_j^\top = \bbm{b_{j1}\ b_{j2}\ \ldots b_{jm_j}}$ and $\Cscr_j=
\bbm{c_{j1}\ c_{j2}\ \ldots c_{jm_j}}$. Solve the following
algebraic equations for $\{b_{j1},b_{j2},\ldots b_{jm_j}\}$:
\vspace{-2mm}
\BEQ{eps_equation}
 \sum_{l=1}^{m_j-k+1} c_{jl}b_{j(l+k-1)} \m=\m -a_{jk}\e^k
\FORALL k\in\{1,2,\ldots m_j\} \m. \vspace{-1mm}
\end{equation}
This set of equations has a unique solution. Indeed, since
$(C_c,S)$ is a detectable pair, so is $(\Cscr_c,J)$. Hence
$(\Cscr_j,J_j)$ is also a detectable pair, which implies that
$c_{j1}\neq0$. The solution to \rfb{eps_equation} can be computed
sequentially as follows: $b_{jm_j}=-\e^{m_j}a_{jm_j}/c_{j1}$ and
given $\{b_{j(k+1)},b_{j(k+2)},\ldots b_{jm_j}\}$ for any $1\leq
k\leq m_j-1$, \vspace{-2mm}
$$ b_{jk} \m=\m -\Big(a_{jk}\e^k + \sum_{l=2}^{m_j-k+1} c_{jl}
   b_{j(l+k-1)}\Big)\Big/c_{j1} \m. \vspace{-2mm} $$
It is straightforward to verify that for $\Bscr_j$ chosen such that
\rfb{eps_equation} holds, \rfb{TF_subsystem} holds.
\end{remark}

\begin{remark}
Let us consider for the moment the frequently encountered situation
when $m_j=1$ for all $j\in\{0,1,\ldots q\}$ (i.e., $S$ can be
diagonalized). Then the method described in Remark \ref{clarify}
becomes much simpler. We write $a_j$ instead of $a_{j1}$. The numbers
$a_j$ must be chosen such that $\Re[a_j\GGG(i\alpha_j)]>0$. The
transfer function $\CCC$ of the linear controller from \rfb{lincontr}
should be given by \vspace{-3mm}
$$ -\m\CCC(z) \m=\m \frac{\e a_0}{z} + \e\sum_{j=1}^q \left(
   \frac{a_j}{z-i\alpha_j} + \frac{\bar a_j}{z+i\alpha_j}\right) ,
   \vspace{-2mm}$$
with $\e>0$ sufficiently small, and the vector $B_c$ must be chosen
accordingly. Under a passivity assumption on the plant, any $\e>0$
will work, see \cite{ReWe:03}.
\end{remark}

\begin{remark} \label{alternate}
The transfer function of the controller in \rfb{lincontr} has poles at
$\pm i\alpha_j$ ($j\in\{0,1,\ldots p\}$). The necessity of the
condition $\GGG(i\alpha_j)\neq 0$ for the solvability of the local
error feedback regulator problem can be seen also from the following
fact: in a stable closed-loop system there cannot be an unstable
pole-zero cancelation in the product of the plant and controller
transfer functions. This is well-known but not easy to find in the
literature, see for instance \cite{AndGev}.
\end{remark} \vspace{-8mm}

\section{Reduced order local immersion} \label{sec3}
\setcounter{equation}{0} \vspace{-2mm} 

\ \ \ We start this section by stating (in Theorem \ref{CMT}) a
version of the center manifold theorem based on \cite[Theorem 2.9 and
Corollary 2.12]{HaIo:11}. We remark that several hypothesis in
\cite{HaIo:11}, which considers infinite-dimensional plants, hold
trivially for finite-dimensional plants (see \cite[Exercise
2.8]{HaIo:11}). Afterwards we show (in Proposition \ref{Haragus}) that
for certain autonomous systems $(W_1,s_1,\g_1)$ a given local
immersion $(W_2,s_2, \g_2)$ can be reduced, by eliminating all the
eigenvalues of the linearization of $s_2$ around the origin that are
not on the imaginary axis, to obtain a local immersion of a lower
order. The proof of this is based on the center manifold theorem.

We denote by $I$ and by $\Bscr(w,d)$ the identity operator and the
open ball of radius $d>0$ around $w$, respectively, on any space (the
space will be clear from the context). We use the concept of {\em
spectral projection operator}, see \cite[p.~32]{HaIo:11} for
details.

\vspace{-1mm} \begin{theorem} \label{CMT}
Consider an autonomous system $(W_2,s_2,\g_2)$ of order $r_2$ and
define $S_2=\left[\frac{\partial s_2}{\partial\zeta}\right]_
{(0)}$. Let $\PPP:\rline^{r_2}\to\rline^{r_2}$ denote the spectral
projection operator corresponding to the set of all eigenvalues of
$S_2$ that lie on the imaginary axis. Let $E_0$ and $E_h$ be the
images of $\m\PPP$ and $I-\PPP$, respectively. Then there exists a
$C^2$ map $\Psi:E_0\to E_h$ with $\Psi(0)=0$, $\left[\frac{\partial
\Psi}{\partial\zeta}\right]_{(0)}=0$ and a ball $\Bscr(0,d)\subset
W_2$ such that the {\em center manifold} \vspace{-4mm}
$$ \m\hspace{20mm} \Mscr_0 \m=\m \{w_0+\Psi(w_0)\m\big|\m w_0\in
   E_0\} \subset\m \rline^{r_2} \vspace{-3mm}$$
has the following properties: \vspace{-4mm}
\begin{enumerate}
 \item $\Mscr_0$ is locally invariant, which means the following:
 for each state trajectory $w_2$ of $(W_2,s_2,\g_2)$ satisfying
 $w_2(0)\in\Mscr_0\cap\Bscr(0,d)$ and $w_2(t)\in\Bscr(0,d)$ for all
 $t\in[0,T]$, we have $w_2(t)\in\Mscr_0$ for all $t\in[0,T]$.
 \vspace{-2mm}
 \item Every state trajectory $w_2$ of $(W_2,s_2,\g_2)$
 satisfying $w_2(t)\in\Bscr(0,d)$ for all $t\in(-\infty,\infty)$
 also satisfies $w_2(0)\in\Mscr_0$.
\end{enumerate} \vspace{-4mm}

Moreover, for each state trajectory $w_2$ of $(W_2,s_2,\g_2)$
satisfying $w_2(t)\in\Mscr_0\cap\Bscr(0,d)$ for all $t\in[0,T)$,
the function $w(t)=\PPP w_2(t)$ satisfies \vspace{-3.5mm}
\BEQ{reduced_sys}
  w_2 \m=\m w +\Psi(w)\m, \qquad \frac{\dd w}{\dd t} \m=\m
  \PPP s_2(w+\Psi(w)) \FORALL t\in[0,T) \m.
\end{equation}
\end{theorem}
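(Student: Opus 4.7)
The plan is to verify that the hypotheses of Theorem 2.9 and Corollary 2.12 of \cite{HaIo:11} are satisfied in our finite-dimensional $C^2$ setting, and then to read off the conclusions. Since \cite{HaIo:11} is written for infinite-dimensional semilinear evolution equations, the work consists almost entirely in checking that the abstract assumptions there collapse to trivialities here, as indicated by Exercise 2.8 of that reference.

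First, I would write $s_2(w_2) = S_2 w_2 + R(w_2)$, where $R(0)=0$ and $\left[\frac{\partial R}{\partial \zeta}\right]_{(0)} = 0$, so that $R$ is a genuine $C^2$ nonlinearity. Using the spectral projection $\PPP$, I decompose $\rline^{r_2} = E_0\oplus E_h$ with $E_0 = \Ran\mm\PPP$ and $E_h = \Ran(I-\PPP)$. Because $S_2$ commutes with $\PPP$, both $E_0$ and $E_h$ are $S_2$-invariant; by construction $S_2|_{E_0}$ has all its eigenvalues on the imaginary axis while $S_2|_{E_h}$ has none, which supplies the spectral gap required in \cite[Hypothesis 2.4]{HaIo:11}. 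In finite dimensions the restriction $S_2|_{E_h}$ automatically generates an exponential dichotomy, and the resolvent estimates and smoothing conditions needed in \cite{HaIo:11} hold for trivial reasons.

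Next, I would invoke Theorem 2.9 of \cite{HaIo:11} to obtain a $C^2$ map $\Psi : E_0\to E_h$, defined on a neighborhood of the origin, with $\Psi(0)=0$ and $\left[\frac{\partial \Psi}{\partial \zeta}\right]_{(0)} = 0$. After shrinking if necessary, I can choose $d>0$ small enough that $\Bscr(0,d)\subset W_2$ and $\Psi$ is defined on $\PPP\mm\Bscr(0,d)$. The graph $\Mscr_0 = \{w_0 + \Psi(w_0)\mid w_0\in E_0\}$ is then the desired center manifold, and the local invariance property (1) together with the capture property (2) for globally bounded trajectories are the content of the same theorem.

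Finally, the reduced dynamics \rfb{reduced_sys} follow directly from Corollary 2.12 of \cite{HaIo:11}: for any trajectory $w_2$ staying in $\Mscr_0\cap\Bscr(0,d)$, writing $w = \PPP w_2$ and using $w_2 = w + \Psi(w)$, differentiation and application of $\PPP$ give $\dot w = \PPP s_2(w+\Psi(w))$. I do not expect a serious obstacle: the entire argument is an application of the cited results, and the only point requiring care is keeping track of the identifications $E_0 = \Ran\mm\PPP$, $E_h = \Ran(I-\PPP)$ and ensuring the neighborhood produced by \cite{HaIo:11} can be taken to be a ball $\Bscr(0,d)$ inside $W_2$.
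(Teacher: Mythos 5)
Your proposal is correct and follows essentially the same route as the paper, which states Theorem \ref{CMT} as a direct consequence of Theorem 2.9 and Corollary 2.12 of \cite{HaIo:11} together with the observation (Exercise 2.8 there) that the infinite-dimensional hypotheses hold trivially in finite dimensions. Your added bookkeeping --- the splitting $s_2=S_2w_2+R(w_2)$, the $S_2$-invariance of $E_0$ and $E_h$, and the choice of the ball $\Bscr(0,d)$ --- is exactly the verification the paper leaves implicit.
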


\begin{proposition} \label{Haragus}
Suppose that for the dynamics \rfb{aut_sys} of the autonomous system
$(W_1,s_1,\g_1)$, the origin is a Lyapunov stable equilibrium point
and every $w_0\in W_1$ is nonwandering (as defined before
\rfb{Simy_teeth_operation}). Let $(W_1,s_1,\g_1)$ be locally immersed
into an autonomous system $(W_2,s_2,\g_2)$ of order $r_2$ via the map
$\tau$ (as in Definition {\rm\ref{immersion}}). Denote
$S_2=\left[\frac{\partial s_2}{\partial \zeta}\right]_{(0)}$ and $\G_2=\left[\frac{\partial \g_2}{\partial \zeta}\right]_{(0)}$.

Then there exists an autonomous system $(W_3,s_3,\g_3)$ with linearization $S_3=\left[\frac{\partial s_3}{\partial \zeta}\right]_{(0)}$ and $\G_3=\left[\frac{\partial \g_3}{\partial \zeta}\right]_{(0)}$ such that $(W_1,s_1,\g_1)$ is locally immersed into $(W_3,s_3,\g_3)$, $S_3$ is isomorphic to the restriction of $S_2$ to one of its invariant subspaces and all the eigenvalues of $S_3$ are on the
imaginary axis.

Furthermore if the pair $(\G_2,S_2)$ is detectable, then so is the
pair $(\G_3,S_3)$. \vspace{-2mm}
\end{proposition}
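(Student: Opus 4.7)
The plan is to apply the center manifold theorem (Theorem \ref{CMT}) to the larger system $(W_2,s_2,\g_2)$ and show that the image $\tau(W_1)$ near the origin lies on the resulting center manifold $\Mscr_0$; the reduced dynamics on $\Mscr_0$ will then serve as $(W_3,s_3,\g_3)$. Let $\PPP$, $E_0$, $E_h$, $\Psi:E_0\to E_h$ and $d>0$ be as in Theorem \ref{CMT}, set $r_3=\dim E_0$, and on an open neighborhood $W_3\subset E_0$ of the origin define
\[
   s_3(w) \m=\m \PPP\m s_2(w+\Psi(w))\m, \qquad \g_3(w) \m=\m
   \g_2(w+\Psi(w))\m.
\]
Since $\left[\frac{\partial\Psi}{\partial\zeta}\right]_{(0)}=0$, the linearizations are $S_3=\PPP S_2|_{E_0}$ and $\G_3=\G_2|_{E_0}$. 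Because $E_0$ is invariant under $S_2$ (being the generalized eigenspace corresponding to the imaginary-axis eigenvalues of $S_2$) and $\PPP$ acts as the identity on $E_0$, $S_3$ coincides with $S_2|_{E_0}$ and all its eigenvalues lie on the imaginary axis.

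The crux is to establish $\tau(w_1)\in\Mscr_0$ for every $w_1$ in some neighborhood of the origin in $W_1$. By property 2 of Theorem \ref{CMT}, it suffices to exhibit for each such $w_1$ a trajectory of $(W_2,s_2,\g_2)$ through $\tau(w_1)$ that remains in $\Bscr(0,d)$ for all $t\in(-\infty,\infty)$. The Lyapunov stability of the origin for $s_1$ provides a bounded forward trajectory $\phi_t(w_1)$, $t\geq 0$, whose image under the continuous map $\tau$ (with $\tau(0)=0$) lies in $\Bscr(0,d)$ after shrinking $w_1$. The backward half is the principal obstacle: invoking the nonwandering property \rfb{Simy_teeth_operation} at $w_1$ we obtain sequences $w_n\rarrow w_1$ and $t_n\rarrow\infty$ with $\phi_{t_n}(w_n)\rarrow w_1$, and Lyapunov stability (applied with $w_n$ for $n$ large) ensures that the whole segment $s\mapsto\phi_s(w_n)$, $s\in[0,t_n]$, remains in an arbitrarily small ball. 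Shifting time by $-t_n$ produces curves $v_n:[-t_n,0]\to W_1$ of uniformly bounded values and derivatives; applying Arzel\`a--Ascoli on each compact interval $[-T,0]$ and extracting diagonally yields a limit curve $v_\infty:(-\infty,0]\to W_1$ which is a trajectory of $s_1$, satisfies $v_\infty(0)=w_1$ and is confined to the small ball. Concatenating $v_\infty$ with $\phi_t(w_1)$ gives the desired bi-infinite bounded trajectory through $w_1$, and its image under $\tau$ is the required trajectory through $\tau(w_1)$.

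Having shown $\tau(w_1)\in\Mscr_0$, we set $\tau_3(w_1)=\PPP\tau(w_1)$, so that $\tau(w_1)=\tau_3(w_1)+\Psi(\tau_3(w_1))$. Applying $\PPP$ to the immersion identity $\frac{\partial\tau}{\partial w_1}s_1(w_1)=s_2(\tau(w_1))$ gives $\frac{\partial\tau_3}{\partial w_1}s_1(w_1)=\PPP s_2(\tau_3(w_1)+\Psi(\tau_3(w_1)))=s_3(\tau_3(w_1))$, while the output identity $\g_1(w_1)=\g_2(\tau(w_1))$ becomes $\g_1(w_1)=\g_3(\tau_3(w_1))$; hence $(W_1,s_1,\g_1)$ is locally immersed into $(W_3,s_3,\g_3)$ via $\tau_3$. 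For the detectability claim, suppose that for some $\lambda\in\cline$ with $\Re\lambda\geq 0$ and some nonzero $v\in\cline^{r_3}$ one has $(S_3-\lambda I)v=0$ and $\G_3 v=0$. Viewing $v$ as an element of $\cline^{r_2}$ via the embedding $E_0\subset\rline^{r_2}$, the relations $S_3=S_2|_{E_0}$ and $\G_3=\G_2|_{E_0}$ give $(S_2-\lambda I)v=0$ and $\G_2 v=0$, contradicting the PBH-detectability of $(\G_2,S_2)$. The only genuinely delicate step is the Arzel\`a--Ascoli construction of the backward trajectory, which is where the nonwandering hypothesis (weaker than the Poisson stability used in \cite{Isi:95,IsBy:90}) is decisive.
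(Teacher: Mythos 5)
Your proposal is correct and follows the same overall architecture as the paper's proof: apply the center manifold theorem to $(W_2,s_2,\g_2)$, show that $\tau$ maps a neighborhood of the origin into $\Mscr_0$, take the reduced dynamics on $E_0$ as $(W_3,s_3,\g_3)$ with immersion map $\PPP\tau$, and verify detectability by the Hautus test. (Your omission of the explicit isomorphism $\Jscr:E_0\to\rline^{r_3}$, which the paper uses only so that $W_3$ is literally an open subset of $\rline^{r_3}$ as Definition \ref{autonomous} requires, is cosmetic.) The one step you handle genuinely differently is the key lemma that every trajectory starting near the origin lies on a bounded bi-infinite orbit. The paper takes the unique maximal backward continuation of $w_1(\cdot)$, supposes it reaches norm greater than $2\delta$ at some $t_0<0$, and applies the nonwandering property \emph{at the point $w_1(t_0)$} together with continuous dependence on initial data over the fixed interval $[0,-t_0]$ to produce points of $W_0$ whose forward orbits accumulate at $w_1(t_0)$, contradicting Lyapunov stability. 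You instead apply the nonwandering property \emph{at $w_1$ itself} and construct the backward orbit directly as an Arzel\`a--Ascoli limit of the time-shifted forward segments $s\mapsto\phi_{s+t_n}(w_n)$, which stay in $\Bscr(0,\delta)$ because $w_n\in W_0$ for $n$ large; the limit solves the ODE (pass to the limit in the integral equation), equals $w_1$ at $s=0$, and glues with the forward orbit since both one-sided derivatives at $0$ equal $s_1(w_1)$. Both arguments rest on exactly the same two hypotheses; yours trades the continuous-dependence step for a compactness/diagonal-extraction step and is somewhat more constructive (it exhibits the bounded backward orbit rather than ruling out its escape), while the paper's is shorter once standard ODE facts are granted. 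I see no gap in your argument.
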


\begin{proof}
Due to our Lyapunov stability assumption for \rfb{aut_sys}, there
exists an open set $W_0\subset W_1$ containing the origin and
$\delta>0$ such that $\overline{\Bscr(0,2\delta)}\subset W_1$ and
if $w_1(0)\in W_0$, then for the solution $w_1$ of \rfb{aut_sys},
$\|w_1(t)\|<\delta$ holds for all $t\geq 0$.

We claim that for all $w_1(0)\in W_0$, \rfb{aut_sys} has a
well-defined trajectory backwards in time and $\|w_1(t)\|<2\delta$ for
all $t<0$. To prove this, fix $w_1(0)\in W_0$. Then there exists a
maximal interval $(T,0]$ with $T<0$ on which \rfb{aut_sys} has a
unique solution $w_1$ and if $|T|<\infty$, then there exists a
$t_0\in(T,0)$ such that $\|w_1(t_0)\|>2\delta$. All this follows by
applying the standard results about existence and uniqueness of
solutions for ordinary differential equations (ODEs), evolving forward
in time, to the ODE $\dot w=-s_1(w)$ (see, e.g., \cite{Kha:02} or
\cite[Theorem II.2 and Corollary II.3]{JaWe:09}). Thus, to prove our
claim, we have to prove that $\|w_1(t_0)\|>2 \delta$ cannot occur for
any $t_0<0$.

Suppose that there exists a $t_0\in(T,0)$ with $\|w_1(t_0)\|>2\delta$.
Clearly, $w_1$ can be extended uniquely to a solution of \rfb{aut_sys}
on $(T,\infty)$ (recall that $w(t)\in W_1$ for all $t\in(T,\infty)$
and $w_1(0)\in W_0$). Denote by $\phi$ the flow generated by $s_1$ on
$W_1$. Since every point in $W_1$ is nonwandering, according to
\rfb{Simy_teeth_operation} we can find a sequence $(w_n)$ in $W_1$ and
a sequence $(t_n)$ in $(0,\infty)$ such that $t_n\rarrow \infty$,
$w_n\rarrow w_1(t_0)$ and $\phi_{t_n}(w_n)\rarrow w_1(t_0)$.
According to the continuous dependence of solutions of ODEs on the
initial state (\cite[Chapter 3]{Kha:02}) we have, in addition, that
$\phi_{-t_0}(w_n)\rarrow \phi_{-t_0}(w_1(t_0))=w_1(0)$. Thus, for $n$
sufficiently large, $\phi_{-t_0}(w_n)\in W_0$ and hence for $t>0$ the
trajectory $\phi_{t-t_0}(w_n)$ remains in $\Bscr(0,\delta)$. This
contradicts the earlier conclusion that the same trajectory has
$w_1(t_0)$ as one limit point. Thus we have proved our claim.

We complete this proof by applying the center manifold theorem to
$(W_2,s_2,\g_2)$. Recall the open ball $\Bscr(0,d) $, the subspaces
$E_0,E_h\subset\rline^{r_2}$, the center manifold $\Mscr_0$, the
nonlinear map $\Psi:E_0\to E_h$ and the projector $\PPP:\rline^{r_2}
\to E_0$ from Theorem \ref{CMT}. The system $(W_1,s_1,\g_1)$ is
locally immersed into $(W_2,s_2,\g_2)$ via the $C^2$ map $\tau$. First
we show that the image of any state trajectory of \rfb{aut_sys},
starting from some $w_1(0)\in W_0$, under $\tau$ is contained in
$\Mscr_0$. For this we suppose that $\delta>0$ is sufficiently
small such that $\tau\Bscr(0,2\delta) \subset\Bscr(0,d)$ ($\delta$ can
be made arbitrarily small by choosing $W_0$ appropriately). The claim
established earlier implies that for each $w_1(0)\in W_0$, there
exists a unique solution $w_1:\rline \rarrow W_1$ for \rfb{aut_sys}
and $\|w_1(t)\|<2\delta$ for all $t\in\rline$. This and $\tau\Bscr
(0,2\delta)\subset\Bscr(0,d)$ imply, via Theorem \ref{CMT}, that
\vspace{-1mm}
\BEQ{arkia_call}
  \tau w_1(t)\in\Mscr_0\cap\Bscr(0,d) \FORALL t\in\rline \m.
  \vspace{-2mm}
\end{equation}

Next we construct $(W_3,s_3,\g_3)$ with the desired properties. Denote
$r_3=\dim E_0$ and let $\Jscr$ be the linear isomorphism from $E_0$ to
$\rline^{r_3}$. Define $W_3=\Jscr(E_0\cap \Bscr(0,d))$. Clearly $W_3$
is an open set containing the origin. For any point $w_1(0)\in W_0$,
\rfb{arkia_call} and \rfb{reduced_sys} give that the function
$w_3=\Jscr\PPP w_2$, where $w_2$ is the state trajectory of
$(W_2,s_2,\g_2)$ with $w_2(0) =\tau w_1(0)$, is such that $w_3(t)\in
W_3$ for all $t\in[0,\infty)$ and on this interval \vspace{-2mm}
$$  \frac{\dd w_3}{\dd t} \m=\m \Jscr\PPP s_2(\Jscr^{-1}w_3 +
    \Psi(\Jscr^{-1}w_3)) \m.$$
This motivates to define the maps $s_3:W_3\to\rline^{r_3}$ and $\g_3:
W_3\to\rline$ by \vspace{-1mm}
$$  s_3(w) \m=\m \Jscr\PPP s_2(\Jscr^{-1}w+\Psi(\Jscr^{-1}w))\m,
  \qquad \g_3(w) \m=\m \g_2(\Jscr^{-1}w+\Psi(\Jscr^{-1}w))
  \m.\vspace{-1mm}$$
Clearly $(W_3,s_3,\g_3)$ is an autonomous system of order $r_3$. It
is easy to check, using the definitions of $\PPP$ and $\Psi$ and the
facts \vspace{-2mm}
$$ \tau(w_0)\in\Mscr_0\cap\Bscr(0,d) \m, \qquad\PPP\tau(w_0)+\Psi
   (\PPP\tau(w_0)) \m=\m \tau(w_0) \FORALL w_0\in W_0 \vspace{-1mm}$$
that $(W_1,s_1,\g_1)$ is locally immersed in $(W_3,s_3,\g_3)$ via the
map $\Jscr\PPP\tau$. The linearization of $s_3$ around the origin is
$S_3=\Jscr\PPP S_2\Jscr^{-1}$, i.e. $S_3$ is isomorphic to the
restriction of $S_2$ to its invariant subspace $E_0$ and $\sigma_p(S_3)
=\sigma_p(S_2)\cap i\rline$. This completes the proof of the main claim
of the proposition.

The linearization of $\g_3$ around the origin is $\G_3=\G_2\Jscr^{-1}$.
Assume that the pair $(\G_2,S_2)$ is detectable but the pair
$(\G_3,S_3)$ is not. Then, using the Hautus test (see, for instance,
\cite[p.~14]{KnIsFl:93}), it follows that for some $p\in\cline$
with $\Re p \geq0$, some $w\in\rline^{r_3}$ and denoting $v=\Jscr^{-1}w
\in E_0$, \vspace{-1mm}
\BEQ{modified_det}
  0 \m=\m \bbm{S_3 w- pw \\ \G_3 w} \m=\m \bbm{\Jscr\PPP S_2v-p\Jscr v
  \\ \G_2 v} \m=\m \bbm{\PPP S_2v-pv \\ \G_2 v} \m.\vspace{-1mm}
\end{equation}
To get the last equality we have used the fact that $\PPP S_2v-pv
\in E_0$ and $\Jscr:E_0\to\rline^{r_3}$ is an isomorphism. Since
$E_0$ is an invariant subspace for $S_2$, $v\in E_0$ and $\PPP$ is
the projection onto $E_0$, we get that $\PPP S_2v-pv=S_2v-pv$. This
and \rfb{modified_det} give the contradiction that $(\G_2,S_2)$
is not detectable. Thus if $(\G_2,S_2)$ is detectable, then $(\G_3,
S_3)$ must also be detectable.
\end{proof}

\begin{remark} \label{pp_verify}
Let $(W_1,s_1,\g_1)$, $(W_2,s_2,\g_2)$, $(W_3,s_3,\g_3)$, $S_2$ and
$S_3$ be as in the above proposition. In particular, $(W_2,s_2,\g_2)$
is a local immersion of order $r_2$ for $(W_1,s_1,\g_1)$ and $(W_3,
s_3,\g_3)$ is a local immersion of order $r_3$ for $(W_1,s_1,\g_1)$.
Since $S_3$ is isomorphic to the restriction of $S_2$ to one of its
invariant subspaces, it follows that $r_3\leq r_2$ and $\sigma_p(S_3)
\subset \sigma_p(S_2)$ and furthermore, since $\sigma_p(S_3) \subset
i\rline$, if $\sigma_p(S_2)\not\subset i\rline$, then $r_3<r_2$. Hence
if $(W_2,s_2,\g_2)$ is a local immersion of smallest order for
$(W_1,s_1,\g_1)$, then clearly $r_3<r_2$ cannot occur and so
$\sigma_p(S_2)\subset i\rline$.
\end{remark}

\vspace{-8mm}
\section{A minimal order controller} \label{sec4}
\setcounter{equation}{0} 
\vspace{-2mm}

\ \ \ Using the notation and assumptions from Subsection 2.1, in this
section we construct a low order controller that solves the local
error feedback regulator problem. Under some technical assumptions
this controller is of minimal order. This controller can be viewed as
a nonlinear version of the controller \rfb{lincontr}. Our main
theorem below is complicated, but if we are willing to compromise a
bit on the generality, then we get a much more simple version of it,
stated below as Corollary \ref{nlDcon}. The theorem states the
existence of the desired low order controller, but we also provide a
way to construct such a controller, see Remark \ref{contr_design}.

\noindent \vspace{-8mm}
\begin{framed} \vspace{-5mm}
\begin{theorem} \label{newmain}
Suppose that $A$ is stable and there exist $C^2$ maps $\pi:W^o\to X$
and $\g:W^o\to U$, where $W^o\subset W$ is an open set containing
zero, that satisfy the nonlinear regulator equations \rfb{nlreg1} and
\rfb{nlreg2}, with $\pi(0)=0$ and $\g(0)=0$. Let the autonomous system
$\{W^o,s,\g\}$ be locally immersed into the autonomous system $\{E^o,
\phi,\l\}$, where $E^o\subset\rline^\nu$ is an open set containing zero
and $\phi:E^o\to\rline^\nu$ and $\l:E^o\to\rline$ are $C^2$ maps
satisfying $\phi(0)=0$ and $\l(0)=0$. Assume that the linearizations
$\Phi=\left[\frac{\partial\phi}{\partial\zeta}\right]_{(0)}$ and $\L=
\left[\frac{\partial\l}{\partial\zeta}\right]_{(0)}$ satisfy the
following conditions: \vspace{-1.5mm}
\BEQ{key_found}
   (\L,\Phi) \mbox{ is detectable,} \qquad \GGG(p) \not= 0\ \ \ \
   \forall\ p\in\sigma_p(\Phi)\cap i\rline \m. \vspace{-1.5mm}
\end{equation}

If $\sigma_p(\Phi)\subset i\rline$, then there exists a $B_c\in
\rline^{\nu}$ such that the controller \vspace{-1.5mm}
\BEQ{contr}
  \dot\xi \m=\m \phi(\xi) + B_c e \m, \qquad u \m=\m \l(\xi) \m,
  \vspace{-1.5mm}
\end{equation}
solves the local error feedback regulator problem for the plant
\rfb{nlplant}, the exosystem \rfb{nlexo} and the error \rfb{error}.

If $\sigma_p(\Phi)\not\subset i\rline$, then there is a local
immersion $\{E^o_s,\phi_s,\l_s\}$ of smaller order for $\{W^o,s,\g\}$,
i.e. $E^o_s\subset\rline^{\nu_s}$ with $\nu_s<\nu$, such that denoting
$\Phi_s=\left[\frac{\partial\phi_s}{\partial\zeta}\right]_{(0)}$ and
$\L_s=\left[\frac{\partial\l_s}{\partial\zeta}\right]_{(0)}$, the pair
$(\L_s,\Phi_s)$ is detectable and $\sigma_p(\Phi_s)\subset\sigma_p
(\Phi)\cap i\rline$. In this case, there exists $B_{cs}\in\rline
^{\nu_s}$ such that the controller \vspace{-1.5mm}
\BEQ{contrs}
  \dot\xi \m=\m \phi_s(\xi) + B_{cs} e \m, \qquad u \m=\m \l_s(\xi)
  \m, \vspace{-1.5mm}
\end{equation}
solves the local error feedback regulator problem for the plant
\rfb{nlplant}, the exosystem \rfb{nlexo} and the error \rfb{error}.

Let $\nu>0$ be the smallest integer such that there exist some $\pi$
and $\g$ as above and a local immersion $\{E^o,\phi,\l\}$ of order
$\nu$ for $\{W^o,s,\g\}$, with linearizations $\Phi=\left[\frac
{\partial\phi}{\partial\zeta}\right]_{(0)}$ and $\L=\left[\frac
{\partial\l}{\partial\zeta}\right]_{(0)}$, for which the conditions
in \rfb{key_found} hold. Then $\sigma_p(\Phi)\subset i\rline$ and the
corresponding controller \rfb{contr} of order $\nu$ is minimal, i.e.
it is of the lowest possible order among controllers of the form
\rfb{gencon} that solve this error feedback regulator problem.
\end{theorem}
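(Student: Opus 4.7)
My plan is to dispatch the three claims in order, reducing the minimality assertion to Part A via Proposition \ref{Haragus}. \textbf{Part A} (when $\sigma_p(\Phi)\subset i\rline$): I would apply Proposition \ref{ReWe} and the explicit construction in Remark \ref{clarify} to the stable linearized plant $(A,B,C,D)$, with $(\L,\Phi)$ in the role of $(C_c,S)$. All three hypotheses are in place: $A$ is Hurwitz by Assumption \ref{stab}, $(\L,\Phi)$ is detectable, and $\GGG(p)\neq 0$ for every $p\in\sigma_p(\Phi)=\sigma_p(\Phi)\cap i\rline$. This produces a real vector $B_c\in\rline^\nu$ for which
$$\Ascr \m=\m \bbm{A & B\L \\ B_c C & \Phi+B_cD\L}$$
is Hurwitz. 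Since $\Ascr$ is the linearization at the origin of the unforced closed-loop of \rfb{nlplant} and \rfb{contr}, Lyapunov's indirect method delivers condition 1 of Definition \ref{regprob}. For condition 2 I would verify the hypotheses of Lemma \ref{lm:IsBy}: letting $\tau$ realize the local immersion of $(W^o,s,\g)$ into $(E^o,\phi,\l)$ and setting $\sigma=\tau$, the identities $\frac{\partial\sigma}{\partial w}s(w)=\phi(\sigma(w))=\eta(\sigma(w),0)$ and $\theta(\sigma(w))=\l(\tau(w))=\g(w)$ reduce \rfb{lm1}--\rfb{lm3} to the nonlinear regulator equations \rfb{nlreg1}--\rfb{nlreg2} that $(\pi,\g)$ satisfies by hypothesis.

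\textbf{Part B} (when $\sigma_p(\Phi)\not\subset i\rline$): I would apply Proposition \ref{Haragus} to $(W^o,s,\g)$ locally immersed into $(E^o,\phi,\l)$ via $\tau$. Its hypotheses---Lyapunov stability of the origin and the nonwandering property of every point---are inherited directly from the standing assumption on the exosystem, since the state dynamics of $(W^o,s,\g)$ are just those of the exosystem restricted to $W^o$. This yields an autonomous system $(E^o_s,\phi_s,\l_s)$ of order $\nu_s<\nu$ with $\sigma_p(\Phi_s)\subset\sigma_p(\Phi)\cap i\rline$ and with detectability of $(\L_s,\Phi_s)$ inherited from $(\L,\Phi)$. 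The containment $\sigma_p(\Phi_s)\subset\sigma_p(\Phi)\cap i\rline$ immediately gives $\GGG(p)\neq 0$ on $\sigma_p(\Phi_s)$, placing us in the setting of Part A applied to $(E^o_s,\phi_s,\l_s)$ and producing the desired $B_{cs}$.

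For the \textbf{minimality} claim, let \rfb{gencon} be any controller of order $n_c$ solving the regulator problem. Lemma \ref{lm:IsBy} supplies maps $\pi:W^o\to X$ and $\sigma:W^o\to X_c$ satisfying \rfb{lm1}--\rfb{lm3}, and setting $\g_1=\theta\circ\sigma$ makes $(\pi,\g_1)$ solve the regulator equations \rfb{nlreg1}--\rfb{nlreg2}; moreover, $(X_c,\eta(\cdot,0),\theta)$ becomes a local immersion of $(W^o,s,\g_1)$ of order $n_c$ via $\sigma$. To show this immersion satisfies \rfb{key_found}, I would use that the linearized unforced closed-loop matrix $\bbm{A & BK\\ GC & F+GDK}$ is Hurwitz: a one-line PBH test rules out any $v\neq 0$ with $Fv=pv$, $Kv=0$, $\Re p\geq 0$, giving detectability of $(K,F)$. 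Furthermore, if some $p\in\sigma_p(F)\cap i\rline$ satisfied $\GGG(p)=0$, then for any eigenvector $v$ of $F$ at $p$ the pair $\bbm{(pI-A)^{-1}BKv\\v}$ (or $\bbm{0\\v}$ when $Kv=0$) would be an eigenvector of the closed-loop matrix with imaginary eigenvalue $p$, contradicting Hurwitz. Applying Proposition \ref{Haragus} to this immersion whenever $\sigma_p(F)\not\subset i\rline$ yields an admissible triple of order at most $n_c$ (detectability and the transmission-zero condition transfer exactly as in Part B), so $n_c\geq\nu$ by the minimality of $\nu$; the assertion $\sigma_p(\Phi)\subset i\rline$ at the minimum is then the content of Remark \ref{pp_verify}. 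The main obstacle I anticipate is the bookkeeping needed to invoke Proposition \ref{Haragus} for the several auxiliary autonomous systems $(W^o,s,\g)$, $(W^o,s,\g_1)$ and track how the conditions \rfb{key_found} are preserved under each reduction; the only non-routine calculation is the closed-loop eigenvector construction above (which could alternatively be replaced by an appeal to Remark \ref{alternate}).
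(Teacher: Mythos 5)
Your proposal is correct and follows essentially the same route as the paper's proof: Proposition \ref{ReWe} plus Lemma \ref{lm:IsBy} (with $\sigma=\tau$) for the case $\sigma_p(\Phi)\subset i\rline$, Proposition \ref{Haragus} for the reduction when $\sigma_p(\Phi)\not\subset i\rline$, and the conversion of an arbitrary solving controller into a pair $(\pi,\g_1)$ solving the regulator equations together with an order-$n_c$ immersion satisfying \rfb{key_found} for minimality. The only cosmetic difference is that you justify $\GGG(p)\neq 0$ on $\sigma_p(F)\cap i\rline$ by an explicit closed-loop eigenvector construction, where the paper invokes the standard fact that a stable loop admits no unstable pole-zero cancellation (Remark \ref{alternate}); both are valid.
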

\vspace{-5mm} \end{framed} \vspace{-2mm}

\begin{remark} \label{relation_IsBy}
Theorem \ref{newmain} can be regarded as a refinement/extension of
Theorem \ref{IsBy} when the plant is locally exponentially stable.
Following Theorem \ref{IsBy}, the above theorem assumes that the
nonlinear regulator equations \rfb{nlreg1} and \rfb{nlreg2} are solved
by functions $\pi$ and $\g$ defined on $W^o$ and that $(W^o,s,\g)$ is
locally immersed into the internal model $(E^o,\phi, \l)$. But instead
of the condition \rfb{imm_stab_det} in Theorem \ref{IsBy} we assume in
Theorem \ref{newmain} that \rfb{key_found} holds, which is implied by
\rfb{imm_stab_det}. In the linear SISO case, the internal model can be
chosen to be a copy of the exosystem, but in the nonlinear SISO case
this is not always possible, see Example \ref{exmp2}.
\end{remark}

\begin{proof}
Consider the unforced closed-loop system of the plant \rfb{nlplant}
and the controller \rfb{contr} (unforced meaning $w=0$ and so $y=e$).
The state operator of the linearization of this closed-loop system at
the origin is \vspace{-2mm}
\BEQ{Ascr_cl}
  \Ascr_{cl} \m=\m \bbm{A & B \L\\ B_c C & \Phi+B_c D \L}
  \m. \vspace{-2mm}
\end{equation}
Suppose that \rfb{key_found} holds and $\sigma_p(\Phi)\subset i\rline$.
It then follows from Proposition \ref{ReWe} that there exists a
$B_c\in\rline^{\nu}$ such that $\Ascr_{cl}$ is exponentially stable.
Fix such a $B_c$. The forced closed-loop system of the plant
\rfb{nlplant} and the controller \rfb{contr} (with $B_c$ chosen as
above), with the error \rfb{error} as its output, is \vspace{-1.5mm}
$$ \bbm{\dot x \\ \dot \xi} \m=\m \Fscr(x,\xi,w) \m, \qquad e \m=\m
   h(x,\g(\xi),w) \m, \vspace{-2mm} $$
where \vspace{-1mm}
$$ \Fscr(x,\xi,w) \m=\m \bbm{f(x,\g(\xi),w)\\ \phi(\xi) + B_c
   h(x,\g(\xi),w)} \m. $$
This closed-loop system is locally exponentially stable when $w$ is
identically zero, since the linearization of $\Fscr(x,\xi,0)$ at
$(x,\xi)=(0,0)$ (its Jacobian) is $\Ascr_{cl}$.

We will now apply Lemma \ref{lm:IsBy} to show that the controller
\rfb{contr} solves the local error feedback regulator problem for the
plant \rfb{nlplant}, the exosystem \rfb{nlexo} and the error
\rfb{error}. Comparing \rfb{contr} to its general form \rfb{gencon}
we get \vspace{-1.5mm}
\BEQ{comparison}
  \eta(\xi,0) \m=\m \phi(\xi) \m, \qquad \theta(\xi) \m=\m
  \l(\xi) \FORALL \xi\in E^o \m. \vspace{-1.5mm}
\end{equation}
Let $\tau:\widetilde W^o\to E^o$ be a $C^2$ map that locally immerses
$\{W^o,s,\g\}$ into $\{E^o,\phi,\l\}$. Here $\widetilde W^o\subset
W^o$ is a open set containing zero. So $\tau(0)=0$, \vspace{-1.5mm}
\BEQ{CPDE}
 \frac{\partial\tau}{\partial w}s(w) \m=\m \phi(\tau(w)) \m,
 \qquad \g(w) \m=\m \l(\tau(w)) \FORALL w\in \widetilde W^0
 \vspace{-1.5mm} \m.
\end{equation}
By assumption, the maps $\pi:W^o\to X$ and $\g:W^o\to U$ satisfy the
nonlinear regulator equations \rfb{nlreg1} and \rfb{nlreg2}. It
follows from this, \rfb{comparison} and \rfb{CPDE} that if we define
$\sigma(w)=\tau(w)$ for all $w\in\widetilde  W^o$, then the pair $(\pi,
\sigma)$ satisfy \rfb{lm1}-\rfb{lm3} (with $\widetilde W^o$ in place of
$W^o$ in Lemma \ref{lm:IsBy}). Thus the controller \rfb{contr} solves
the above error feedback regulator problem.

If $\sigma_p(\Phi)\not\subset i\rline$, then the existence of
a local immersion $\{E^o_s,\phi_s,\l_s\}$ for $\{W^o,s,\g\}$, with
the properties described in the theorem, follows directly from
Proposition \ref{Haragus}. In this case, a $B_{cs}\in\rline^{\nu_s}$
such that the controller \rfb{contrs} solves the local error feedback
regulator problem under consideration can be found by applying the
first part of this theorem to $\{E^o_s,\phi_s,\l_s\}$ in place of
$\{E^o,\phi,\l\}$.

Finally, suppose that $\nu>0$ is the smallest integer such that for
some $\pi$ and $\g$ as in the theorem, there is a local immersion
$\{E^o,\phi,\l\}$ of order $\nu$ for $\{W^o,s,\g\}$ for which
\rfb{key_found} holds. Fix such $\pi$, $\g$ and $\{E^o,\phi,\l\}$
of order $\nu$. Our assumptions on the exosystem in \rfb{nlexo}
imply that for the dynamics of $\{W^o,s,\g\}$, the origin is a Lyapunov
stable equilibrium point and every point in $W^o$ is nonwandering.
Since $\{E^o,\phi,\l\}$ is a local immersion of the smallest order for
$\{W^o,s,\g\}$, it follows from Proposition \ref{Haragus} (see also Remark \ref{pp_verify}) that $\sigma_p(\Phi)\subset i\rline$. Hence, $B_c$ can be found so that the controller \rfb{contr} solves the local error feedback regulator problem. We claim that this controller is minimal. In the remaining part of this proof, we establish this claim by contradiction.

Assume that a controller $\CCC$ of the form \rfb{gencon}, determined
by the maps $\eta$ and $\theta$, with order $n_c<\nu$ solves the local
error feedback regulator problem. The state of this controller belongs
to an open set $X_c\subset\rline^{n_c}$ containing zero . Define $\psi
(\xi)=\eta(\xi,0)$ for all $\xi\in X_c$. It now follows from Lemma
\ref{lm:IsBy} that there exist $C^2$ maps $\tilde\pi:W^1\to X$ and
$\tilde\sigma:W^1\to X_c$, where $W^1\subset W^o$ is open and contains
zero, such that $\tilde\pi(0)=0$ and $\tilde\sigma(0)=0$ and
\rfb{lm1}-\rfb{lm3} hold (with $\tilde\pi,\m\tilde\sigma$ in place of
$\pi,\sigma$ and $W^1$ in place of $W^o$). Define $\tilde\g(w)=\theta
(\tilde\sigma(w))$ for all $w\in W^1$. Comparing \rfb{lm1} with
\rfb{nlreg1} and \rfb{lm3} with \rfb{nlreg2} it follows that $\tilde
\pi$ and $\tilde\g$ solve the regulator equations
\rfb{nlreg1}-\rfb{nlreg2}. This and \rfb{lm2} imply that $\{X_c,\psi,
\theta\}$ is a local immersion for $\{W^o,s,\tilde\g\}$. Recall the
matrices $F$, $G$ and $K$ from \rfb{shooting_in_Brussels} that
determine the linearized controller. The state operator for the
linearization of the unforced closed-loop system of the plant
\rfb{nlplant} and the controller {\bf C} at the origin is
\vspace{-2.5mm}
\BEQ{lemonade}
   \Ascr_f \m=\m \bbm{A & B K\\ G C & F+G D K} \m, \vspace{-1.5mm}
\end{equation}
which by assumption is stable. This means that the linearized
controller $(F,G,K)$ is stabilized by the linearized plant
$(A,B,C,D)$. Therefore $(K,F)$ is detectable and there is no unstable
pole/zero cancelations in the product of the plant and the controller
transfer functions. This implies that $\GGG(p)\neq0$ for each
$p\in\sigma_p(F)\cap i\rline$. We get that $\{X_c,\psi,\theta\}$ is a
local immersion of order $n_c<\nu$ for $\{W^o,s,\tilde\g\}$ for which
\rfb{key_found} holds (with $(K,F)$ in place of $(\L,\Phi)$). But this
contradicts the minimality of $\nu$ assumed in the last part of the
theorem. \end{proof}

Recall the matrices $A, P, S, C$ and $Q$ defined below
\rfb{Bolt_won_200m}. When the pair of matrices $\left(\sbm{C & Q},
\sbm{A & P\\ 0 &S}\right)$ is detectable, we have the following
corollary of Theorem \ref{newmain}.

\noindent \vspace{-6mm}
\begin{framed} \vspace{-4mm}
\begin{corollary} \label{nlDcon}
Suppose that $A$ is stable, the matrix pair $\left(\sbm{C & Q},
\sbm{A & P\\ 0 &S}\right)$ is detectable and there exist $C^2$ maps
$\pi:W^o\to X$ and $\g:W^o\to U$, where $W^o\subset W$ is an open
set containing zero, that satisfy the nonlinear regulator equations
\rfb{nlreg1} and \rfb{nlreg2}, with $\pi(0)=0$ and $\g(0)=0$. Then
there exists a $B_c\in\rline^p$ such that the controller
\vspace{-1.5mm}
\BEQ{contr_simple}
  \dot\xi \m=\m s(\xi) + B_c e \m, \qquad u \m=\m \g(\xi) \m,
  \vspace{-1.5mm}
\end{equation}
solves the local error feedback regulator problem for the plant
\rfb{nlplant}, the exosystem \rfb{nlexo} and the error \rfb{error}.
Moreover, this controller is minimal, i.e. it is of the lowest
possible order among all the controllers of the form \rfb{gencon}
that solve this error feedback regulator problem.
\end{corollary}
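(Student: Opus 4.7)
The plan is to derive the corollary as a direct application of Theorem~\ref{newmain}, using the trivial identity local immersion of $\{W^o,s,\g\}$ into itself: set $\{E^o,\phi,\l\}=\{W^o,s,\g\}$ so that $\Phi=S$, $\L=\left[\frac{\partial\g}{\partial w}\right]_{(0)}$ and the order $\nu$ equals the exosystem order $p$. With these identifications the controller \rfb{contr} reduces to \rfb{contr_simple}. The standing exosystem assumptions already force $\sigma_p(S)\subset i\rline$, so the first assertion of Theorem~\ref{newmain} will hand us the required $B_c\in\rline^p$ as soon as we verify the two conditions in \rfb{key_found}: detectability of $(\L,S)$, and $\GGG(p)\ne 0$ for every $p\in\sigma_p(S)\cap i\rline$.

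Both conditions in \rfb{key_found} will be extracted from the detectability of the pair $\left(\sbm{C & Q},\sbm{A & P\\ 0 & S}\right)$ via the Hautus test, combined with the linearized regulator equations obtained by differentiating \rfb{nlreg1}--\rfb{nlreg2} at $w=0$:
\[
\Pi S \m=\m A\Pi+B\L+P\m,\qquad C\Pi+D\L+Q \m=\m 0 \m,
\]
where $\Pi=\left[\frac{\partial\pi}{\partial w}\right]_{(0)}$. If $(\L,S)$ failed to be detectable, the Hautus test would produce $p\in\cline$ with $\Re p\ge 0$ and $v\ne 0$ satisfying $Sv=pv$ and $\L v=0$; a short calculation using the two identities then shows that $\sbm{\Pi v\\ v}$ is a nonzero eigenvector of $\sbm{A & P\\ 0 & S}$ with eigenvalue $p$, annihilated by $\sbm{C & Q}$, contradicting the detectability of the big pair. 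Likewise, if $\GGG(p)=0$ for some $p\in\sigma_p(S)\cap i\rline$ with eigenvector $v\ne 0$, solving the first linearized regulator equation for $\Pi v$ and substituting into the second yields $\GGG(p)\L v+C(pI-A)^{-1}Pv+Qv=0$, so $\sbm{(pI-A)^{-1}Pv\\ v}$ is a nonzero unobservable mode of the big pair, again a contradiction.

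For minimality I would argue by contradiction: assume some controller of the form \rfb{gencon} of order $n_c<p$ solves the regulator problem. Following the last part of the proof of Theorem~\ref{newmain}, Lemma~\ref{lm:IsBy} supplies $C^2$ maps $\tilde\pi,\tilde\sigma$ on a neighborhood of $0$; setting $\tilde\g=\theta\circ\tilde\sigma$, the pair $(\tilde\pi,\tilde\g)$ solves the nonlinear regulator equations, and $\{X_c,\psi,\theta\}$ is a local immersion of $\{W^o,s,\tilde\g\}$ via $\tilde\sigma$. Linearizing, the matrix $T=\left[\frac{\partial\tilde\sigma}{\partial w}\right]_{(0)}\in\rline^{n_c\times p}$ satisfies $TS=FT$ and $\tilde\L=KT$, where $\tilde\L=\left[\frac{\partial\tilde\g}{\partial w}\right]_{(0)}$. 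Since $n_c<p$, $\ker T$ is nontrivial, and $TS=FT$ makes it $S$-invariant; hence $S|_{\ker T}$ admits an eigenvector $v\ne 0$ whose eigenvalue automatically lies in $i\rline$. Because $\tilde\L v=KTv=0$, the first computation of the previous paragraph, applied with $\tilde\pi,\tilde\g$ in place of $\pi,\g$, shows that $\sbm{\tilde\Pi v\\ v}$ is a nonzero unobservable mode of $\left(\sbm{C & Q},\sbm{A & P\\ 0 & S}\right)$, contradicting its detectability. The main delicate point will be the minimality bookkeeping, i.e.\ faithfully reconstructing $\tilde\pi,\tilde\sigma,\tilde\g$ from the hypothetical low-order controller and then combining the $S$-invariance of $\ker T$ with $\sigma_p(S)\subset i\rline$ to extract the imaginary-axis eigenvector inside $\ker T$.
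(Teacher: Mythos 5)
Your existence argument coincides with the paper's: both instantiate Theorem \ref{newmain} with the trivial immersion $\{E^o,\phi,\l\}=\{W^o,s,\g\}$ and verify \rfb{key_found} by linearizing the regulator equations at $w=0$ to get $\Pi S=A\Pi+B\G+P$, $C\Pi+D\G+Q=0$ and running the Hautus test against the detectability of $\left(\sbm{C & Q}, \sbm{A & P\\ 0 &S}\right)$; your two computations (detectability of $(\G,S)$, and $\GGG(i\alpha)\neq 0$ via elimination of $\Pi d$) are exactly the ones in the paper. The minimality half is where you genuinely diverge. The paper routes through the last clause of Theorem \ref{newmain}: it shows that \emph{any} local immersion of $(W^o,s,\g)$, for any solution $\g$ of the regulator equations, has order at least $p$, by taking a generalized eigenvector chain of length $k$ for each $i\alpha\in\sigma_p(S)$, using detectability of $(\G,S)$ to get $\G(S-i\alpha I)^{k-1}d\neq 0$, and pushing the chain through $T$ to conclude that $i\alpha$ has algebraic multiplicity at least $k$ in $\Phi$. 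You instead contradict a hypothetical controller of order $n_c<p$ directly: $\ker T$ is nontrivial and $S$-invariant (from $TS=FT$), hence contains an eigenvector $v$ of $S$ with eigenvalue on $i\rline$ and $\tilde\L v=KTv=0$, and the same Hautus computation, applied to the linearization of $(\tilde\pi,\tilde\g)$, exhibits $\sbm{\tilde\Pi v\\ v}$ as an unobservable mode of the big pair. This is correct and shorter --- one ordinary eigenvector instead of multiplicity bookkeeping --- and it bypasses the minimality clause of Theorem \ref{newmain} entirely, at the cost of redoing the controller-to-immersion extraction (Lemma \ref{lm:IsBy}, $\tilde\g=\theta\circ\tilde\sigma$, $TS=FT$, $\tilde\L=KT$) that the theorem's proof already contains. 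You handle the one genuinely delicate point properly, namely that the detectability computation must be applied to the possibly different solution $(\tilde\pi,\tilde\g)$ produced by the hypothetical controller, not to the given $(\pi,\g)$. What the paper's longer count buys is the reusable, stronger fact that every eigenvalue of $S$ survives with its full algebraic multiplicity in any immersion; your argument buys economy.
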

\vspace{-5mm} \end{framed} \vspace{-2mm}

\begin{proof}
Since $\pi$ and $\g$ satisfy the nonlinear regulator equations
\rfb{nlreg1} and \rfb{nlreg2} (and $\pi(0)=0$ and $\g(0)=0$), by
linearizing these equations at $w=0$, we get \vspace{-1.5mm}
\BEQ{lreg1}
  \Pi S \m=\m A \Pi + B\G + P \m,\qquad
  C\Pi + D\G + Q \m=\m 0 \m, \vspace{-1mm}
\end{equation}
where $\Pi=\left[\frac{\partial\pi}{\partial w}\right]_{w=0}$ and
$\G=\left[\frac{\partial\g}{\partial w}\right]_{w=0}$. Since the
the pair $\left(\sbm{C & Q},\sbm{A & P\\ 0 &S}\right)$ is
detectable, it follows from the Hautus test that \vspace{-2mm}
\BEQ{Hautus_matr}
  \Ker \bbm{A-i\alpha I & P\\ 0 & S-i\alpha I\\ C & Q}
  \m=\m \Bigg\{\bbm{0\\0}\Bigg\}\m \FORALL i\alpha\in\sigma_p(S).
  \vspace{-2mm}
\end{equation}

First we show that the pair $(\G,S)$ is detectable. Suppose that
$(\G,S)$ is not detectable. It then follows using the Hautus test
that for some $i\alpha\in\sigma_p(S)$ and some $d\in\cline^p$ with
$d\neq0$, $(S-i\alpha I)d=0$ and $\G d=0$. Using these expressions,
by multiplying both the equations in \rfb{lreg1} with $d$ (from the
right) we get that \vspace{-1.5mm}
$$ (A-i\alpha I)\Pi d + P d \m=\m 0 \m,\qquad C\Pi d + Q d \m=\m 0
   \m. \vspace{-1.5mm}$$
This means that for an $i\alpha\in\sigma_p(S)$, $\sbm{\Pi d\\d}\neq 0$
is in the kernel of the matrix on the left of \rfb{Hautus_matr}, which
contradicts \rfb{Hautus_matr}. Hence $(\G,S)$ must be a detectable
pair. This implies that the geometric multiplicity of each eigenvalue
of $S$ is 1.

Next we show that $\GGG(i\alpha)\neq0$ for all $i\alpha\in\sigma_p
(S)$. Suppose that $\GGG(i\alpha)=0$ for some $i\alpha\in\sigma_p
(S)$. Fix $d\in\cline^p$ non-zero such that $(S-i\alpha I)d=0$.
From \rfb{lreg1} we get
$$ (A-i\alpha I)\Pi d + B\G d + P d \m=\m 0 \m,\qquad C\Pi d +
   D\G d + Q d \m=\m 0   \m. \vspace{-1.5mm} $$
Solving for $\Pi d$ from the first equation above and substituting
for it in the second equation, and then using $\GGG(i\alpha)=0$, we
get that $-C(A-i\alpha I)^{-1}P d +Qd=0$. This means that $\sbm{-(A-
i\alpha I)^{-1}P d\\d}\neq0$ is in the kernel of the matrix on the
left of \rfb{Hautus_matr} for some $i\alpha\in\sigma_p(S)$, which
contradicts \rfb{Hautus_matr}. Hence $\GGG(i\alpha)\neq0$ for all
$i\alpha\in\sigma_p(S)$. Thus \rfb{key_found} holds (with $(\G,S)$
in place of $(\L,\Phi)$). It now follows from Theorem \ref{newmain}
that there exists $B_c\in\rline^p$ such that the controller
\rfb{contr_simple} solves the local error feedback regulator problem
for the plant \rfb{nlplant}, the exosystem \rfb{nlexo} and the error
\rfb{error}.

To complete the proof of this corollary, we must show that the
controller \rfb{contr_simple} is minimal. For this, according to
Theorem \ref{newmain}, it is sufficient to show that $(W^o,s,\g)$,
independently of our choice of $\pi$ and $\g$ that solve \rfb{nlreg1}
and \rfb{nlreg2}, cannot be locally immersed into an autonomous system
$(E^o,\phi,\l)$ of lower order. Suppose that $(W^o,s,\g)$ is locally
immersed in $(E^o,\phi,\l)$ via the map $\tau$, whose linearization at
the origin is $T$. Let $\Phi$ and $\L$ be linearizations of $\phi$ and
$\l$ at the origin. Then \rfb{CPDE} holds and linearizing it around
$w=0$ gives \vspace{-1.5mm}
\BEQ{sand_storm_over}
  TS \m=\m \Phi T \m, \qquad \G \m=\m \L T \m. \vspace{-1.5mm}
\end{equation}
Let $i\alpha$ be an eigenvalue of $S$ with algebraic multiplicity $k$.
Then for some $d\in\cline^p$ \vspace{-1.5mm}
\BEQ{sand_storm}
  (S-i\alpha I)^{k-1}d \m\neq\m 0\m, \qquad (S-i\alpha I)^{k}d \m
  =\m 0 \m. \vspace{-1.5mm}
\end{equation}
Since $(\G,S)$ is a detectable pair, using the Hautus test and $(S-i
\alpha I)^{k}d=0$, we get that $\G(S-i\alpha I)^{k-1}d\neq0$. The
first equation in \rfb{sand_storm_over} implies that $T(S-\mu I)^m=
(\Phi-\mu I)^m T$ for each $\mu\in\cline$ and any integer $m\geq 0$.
Hence, using \rfb{sand_storm}, we have \vspace{-1.5mm}
$$ 0 \m=\m T (S-i\alpha I)^k d \m=\m (\Phi-i\alpha I)^k T d \m,
   \qquad T (S-i\alpha I)^{k-1}d \m=\m (\Phi-i\alpha I)^{k-1} T d
   \m. \vspace{-1.5mm} $$
The second equation above, using $\G=\L T$ from \rfb{sand_storm_over}
and $\G(S-i\alpha I)^{k-1}d\neq0$, gives $\L (\Phi-i\alpha I)^{k-1}
Td\neq0$. Therefore $(\Phi-i\alpha I)^{k-1}Td\neq0$ and $(\Phi-i\alpha
I)^k Td=0$ (see the first equation above). This means that $i\alpha$
is an eigenvalue of $\Phi$ with algebraic multiplicity $m\geq k$. Thus
we have shown that if $i\alpha$ is an eigenvalue of $S$ with algebraic
multiplicity $k$, then it is also an eigenvalue of $\Phi$ with
algebraic multiplicity $m$ with $m\geq k$. Hence the order of $(E^o,
\phi,\l)$ must be larger than or equal to the order of $(W^o,s,\g)$.
Thus the controller \rfb{contr_simple} is minimal.
\end{proof}

\begin{remark} \label{contr_design}
While Theorem \ref{newmain} and Corollary \ref{nlDcon} only state the
existence of low order controllers that solve the local error feedback
regulator problem, their proofs contain a method for constructing such
controllers. In Theorem \ref{newmain}, suppose that $\sigma_p(\Phi)
\subset i\rline$. Then \rfb{contr} is the required controller in which
$\phi$ and $\l$ are known (given) and $B_c$ should be chosen so that
$\Ascr_{cl}$ in \rfb{Ascr_cl} is stable. Remark \ref{clarify} (see
also Proposition \ref{ReWe}) describes a method for choosing such a
$B_c$. When $\sigma_p(\Phi) \not\subset i\rline$, a lower order
immersion must be constructed as described in the proof of Proposition
\ref{Haragus} (this is a nontrivial task). Then \rfb{contrs} is the
required controller in which $B_c$ should be chosen so that
$\Ascr_{cl}$ in \rfb{Ascr_cl}, with $(\Phi_s,\L_s)$ in place of
$(\Phi,\L)$, is stable. Similarly in Corollary \ref{nlDcon},
\rfb{contr_simple} is the required controller in which the unknown
$B_c$ should be chosen so that $\Ascr_{cl}$ in \rfb{Ascr_cl}, with
$(S,\G)$ in place of $(\Phi,\L)$, is stable.
\end{remark}

\begin{remark} \label{relax}
In this section we have assumed that the state operator $A$ of the
linearized plant is stable. As evident from the proof of Theorem
\ref{newmain}, this assumption is used only to guarantee the
existence of a vector $B_c\in\rline^\nu$ such that $\Ascr_{cl}$ in
\rfb{Ascr_cl} is stable. Therefore, instead of the stability of
$A$, we could have directly assumed the existence of such a
vector, which is a more general assumption. For example, if
\vspace{-2mm}
$$ A \m=\m \bbm{0&1\\ 1 & -1}, \quad B \m=\m \bbm{0\\ 10}, \quad C
   \m=\m \bbm{3 & 4}, \quad D \m=\m 0,$$
$$ \Phi \m=\m \bbm{0 & 1 \\ -1 & 0}, \quad \L \m=\m \bbm{-4 & -5},$$
then $A$ is unstable but $B_c=\bbm{1\\2}$ ensures that $\Ascr_{cl}$
is stable. The problem of finding $B_c$ such that $\Ascr_{cl}$ is
stable is equivalent to the static output feedback stabilization
problem for the following linear time invariant plant:\vspace{-2mm}
$$ \bbm{\dot z_1 \\ \dot z_2} \m=\m \bbm{A^\top & 0 \\ \L^\top
   B^\top & \Phi^\top} \bbm{z_1\\z_2} + \bbm{C^\top\\ \L^\top D}
   u_c\m, \qquad y_c \m=\m z_2 \m. $$
Here $u_c(t)\in\rline$ is the input and $y_c(t)\in\rline^{\nu}$
is the output. Clearly if the static output feedback $u_c=K_c
y_c$ stabilizes the above plant, then $B_c=K_c^\top$ ensures
that $\Ascr_{cl}$ is stable. For details on the static output
feedback stabilization problem, along with some synthesis
algorithms, see \cite{CaLaSu:98,GhOuAi:97,Scherer:97,SyAbDoGr:97}
and the references therein.
\end{remark}

\begin{remark} \label{smoothness}
The claims in Theorem \ref{newmain} and Corollary \ref{nlDcon}
regarding the minimality of certain controllers are valid under
our $C^2$ smoothness assumption for all the maps in this paper.
But if we drop the requirement that the controller be described
by smooth maps, then there may exist controllers with lower
order than those we have proposed. For instance, consider the
following plant with state $x(t)\in\rline$ and input $u(t)\in
\rline$: \vspace{-2mm}
$$ \dot x(t) \m=\m -x(t) + w_1(t) + u(t)\m. $$
Here the disturbance signal $w_1(t)\in\rline$ is generated by the
second-order exosystem \vspace{-2mm}
$$ \bbm{\dot w_1(t) \\ \dot w_2(t)} \m=\m \bbm{0&1\\-1&0}\bbm{w_1
   (t) \\ w_2(t)}. $$
The tracking error is $e(t)=x(t)$, which is available for feedback.
According to Corollary \ref{nlDcon}, the order of any $C^2$ controller
that solves the local error feedback regulator problem for the above
plant, exosystem and error cannot be less than 2. However, note that
the nonlinear discontinuous static output feedback control law
$u=-\textrm{sign}\,x$ ensures that if $|w_1(0)|+|w_2(0)|<1$, then
$\lim_{t\to\infty} x(t)=0$ for all $x(0)\in\rline$. In fact, $x(t)$
converges to 0 in finite time.
\end{remark} \vspace{-7mm}

\section{Examples} \label{sec5} \vspace{-3mm}
\setcounter{equation}{0} 

\ \ \ The first example illustrates our controller design approach
using a locally stable nonlinear plant and a Lyapunov stable nonlinear
exosystem whose state trajectories are all periodic functions. An
interesting feature of this exosystem is that the state operator of
its linearization at the origin has a nontrivial Jordan block. This
example does not require the construction of an immersion. Our second
example considers a second order nonlinear plant and a second-order
linear exosystem with eigenvalues at $\pm i$. We show that no
second-order controller of the form \rfb{gencon} can solve the
regulator problem for this plant and exosystem. We then construct a
third order immersion to solve the regulator problem. In these two
examples the regulator equations can be solved by easy algebraic
manipulations. The third example is based on a practical problem in
power electronics and here the regulator equations reduce to a
quasilinear PDE of first order, without boundary conditions. This is a
challenging problem that gives us some insight into the nature of the
regulator equations. Numerical solutions to the regulator equations
have been studied using various approaches (fixed point theorems and
iterations, power series, finite elements). The relevant literature is
not large, see \cite{AguKre,AubDaPra,ByrGil,Hua:04,Rehak,RehCel} and
the references therein. Our approach in this example is
predominantly analytical.

\begin{example} \label{exmp1}
Consider the two-dimensional nonlinear exosystem \vspace{-1.5mm}
\BEQ{exmp1_exo}
  \dot w_1 \m=\m w_2-w_1^4 \m, \qquad \dot w_2 \m=\m -w_1^3 \m.
  \vspace{-1.5mm}
\end{equation}
The state operator of the linearization of this exosystem at the
origin in $\rline^2$ is $S=\sbm{0&1\\0&0}$, which is a nontrivial
Jordan block. We show that $0\in\rline^2$ is a Lyapunov stable
equilibrium point for \rfb{exmp1_exo} and that all the trajectories of
\rfb{exmp1_exo} starting sufficiently near 0 are periodic. This means
that \rfb{exmp1_exo} satisfies all the conditions required of an
exosystem, as discussed in Subsection 2.1. Existence and uniqueness of
solutions to \rfb{exmp1_exo} on maximal time intervals follows from
the standard theory of ODEs.

The phase portrait of \rfb{exmp1_exo} (disregarding the direction
along the state trajectories) is symmetric about the vertical axis.
Indeed, if $(w_1,w_2)$ is a trajectory of \rfb{exmp1_exo} on some
time interval $[0,T]$, then so is $(-\tilde w_1,\tilde w_2)$,
where $\tilde w_1(t)=w_1(T-t)$ and $\tilde w_2(t)=w_2(T-t)$ for all
$t\in[0,T]$. We claim that each state trajectory  of \rfb{exmp1_exo}
with initial condition $(0,w_2(0))$ for some $w_2(0)\in(0,1]$ is
periodic.

To establish the above claim, we consider a state trajectory
$(w_1,w_2)$ of \rfb{exmp1_exo} with initial condition $(0,w_2(0))$,
where $w_2(0)\in(0,1]$. Using the differential equations in
\rfb{exmp1_exo} we can infer the following: $w_1(t)$ increases and
$w_2(t)$ decreases along the state trajectory on some time interval
$[0,t_1]$, where $t_1$ is such that $w_1(t_1)\in(0,1)$, $w_2(t_1)
\in(0,1)$ and $w_2(t_1)=w_1^4(t_1)$. This means that $w_2(t_1)<
w_1(t_1)$. Suppose that the state trajectory exists and remains in
the open first quadrant on a maximal time interval $(0,t_2)$. Then
for all $t\in(t_1,t_2)$, $w_2(t)<w_2(t_1)$ (because $\dot w_2(t)<0$
when $w_1(t)>0$) and $w_1(t)<w_1(t_1)$ (because when $w_2(t)\in (0,
w_2(t_1))$, then $\dot w_1(t)<0$ if $w_1(t)\geq w_1(t_1)$). Hence on
the time interval $(t_1,t_2)$, $(w_1,w_2)$ is confined to the box
$\{(p,q)\m\big|\m p\in(0,w_1(t_1)),q\in(0,w_2(t_1))\}$. Along the
state trajectory in this box $\dot w_2(t)<0$ and $\dot w_2(t)<\dot
w_1(t)$. This, combined with $w_2(t_1)<w_1 (t_1)$, implies that
\vspace{-2mm}
$$ w_1(t) = w_1(t_1)+\int_{t_1}^t \dot w_1(\tau)\dd\tau \geq
   w_1(t_1)+\int_{t_1}^t \dot w_2(\tau)\dd\tau \geq w_1(t_1)-w_2
   (t_1) \quad \forall\ t\in(t_1,t_2)\m. \vspace{-1mm}$$
Hence $t_2<\infty$, $w_1(t_2)>0$ and $w_2(t_2)=0$.
Since $\dot w_1(t)<w_2(t)$ and $\dot w_2(t)<0$ whenever $(w_1(t),
w_2(t))$ is in the open fourth quadrant and since $\dot w_2(t_2)<0$,
it follows that there exists $t_3>t_2$ such that on the time interval
$(t_2,t_3)$ the state trajectory remains in the fourth quadrant,
$w_1(t_3)=0$ and $w_2(t_3)<0$. Now using the symmetry in the
phase portrait of \rfb{exmp1_exo}, we can conclude that $(w_1
(2t_3-t_2), w_2(2t_3-t_2))=(-w_1(t_2),0)$ and $(w_1(2t_3),w_2
(2t_3))=(0,w_2(0))$, i.e the state trajectory is periodic with
period $2t_3$. This completes the proof of the claim.

Next we will show that along the periodic trajectory $(w_1,w_2)$
described above, \vspace{-1.5mm}
\BEQ{lyap_cond}
  \max_{t\in[0,2t_3]} |w_1(t)| \m\leq\m (w_2(0))^{0.25} \m, \qquad
  \max_{t\in[0,2t_3]} |w_2(t)| \m=\m w_2(0) \m.  \vspace{-2mm}
\end{equation}
From the discussion in the above paragraph we get that $\max_{t
\in[0,2t_3]}|w_1(t)|\leq w_1(t_1)$, $w_1(t_1)=(w_2(t_1))^{0.25}$
and $w_2(t_1)<w_2(0)$. These expressions imply the first inequality
in \rfb{lyap_cond}. We will now show that $|w_2(t_3)|<w_2(0)$ which
implies the second equality in \rfb{lyap_cond}. Define \vspace{-3mm}
$$ z_1(t) \m=\m -w_1(t_1-t) \m, \qquad z_2(t) \m=\m w_2(t_1-t)
   \FORALL t\in[0,t_1] \m, \vspace{-2mm} $$
$$ \tilde z_1(t) \m=\m -w_1(t+t_1)\m, \qquad \tilde z_2(t) \m=\m
   -w_2(t+t_1) \FORALL t\in[0,t_3-t_1] \m. $$
Then $z_1(0)=\tilde z_1(0)$, $z_2(0)=-\tilde z_2(0)$, $z_1(t_1)=
\tilde z_1(t_3-t_1)=0$, $z_2(t_1)=w_2(0)$ and $\tilde z_2(t_3-
t_1)=-w_2(t_3)$. The functions $z_1$, $z_2$, $\tilde z_1$ and
$\tilde z_2$ satisfy the equations \vspace{-1.5mm}
$$ \dot z_1 \m=\m z_2-z_1^4 \m,\qquad \dot z_2 \m=\m -z_1^3 \m,
   \qquad \dot{\tilde{z}}_1 \m=\m \tilde z_2+\tilde z_1^4 \m,
   \qquad  \dot{\tilde{z}}_2 \m=\m -\tilde z_1^3 \vspace{-1.5mm}$$
on their intervals of definition. The curves $(z_1,z_2)$ and
$(\tilde z_1, \tilde z_2)$ (regarded as state trajectories) are in
the closed left half-plane of the phase plane. The curve $(z_1,z_2)$
starts vertically above the curve $(\tilde z_1, \tilde z_2)$ in the
phase plane and both the curves end on the vertical axis. Suppose
that $z_2(t_1)<\tilde z_2(t_3-t_1)$. Then these curves must
intersect at some point $(z_{10},z_{20})$ in the second quadrant
of the phase plane such that $z_{10}<0$ and at that point $\dd z_2/
\dd z_1\leq \dd\tilde z_2/\dd \tilde z_1$. But this inequality is
equivalent to the condition $|z_{10}|^3/(z_{20}-|z_{10}|^4)\leq
|z_{10}^3|/(z_{20}+|z_{10}|^4)$, which holds only if $z_{10}=0$
(here we have used the fact that $w_2(t)-w_1^4(t)>0$ for all
$t\in[0,t_1)$). Thus $z_2(t_1)\geq\tilde z_2(t_3-t_1)$ or
equivalently $|w_2(t_3)|<w_2(0)$.

Denote the open set enclosed by the periodic state trajectory of
\rfb{exmp1_exo} passing through the point $(0,1)$ by $\Omega$.
Fix some $(w_{10},w_{20})\in\Omega$ with $(w_{10},w_{20})\neq
(0,0)$. It then follows from \rfb{lyap_cond} that there exists a
periodic trajectory of \rfb{exmp1_exo} passing through $(0,q)$,
with $q>0$ sufficiently small, such that $(w_{10},w_{20})\notin
\Omega_1$. Here $\Omega_1$ is the open set enclosed by the state
trajectory through $(0,q)$. Therefore the trajectory $(w_1,w_2)$ of
\rfb{exmp1_exo} starting from $(w_{10},w_{20})$ lies inside the set
$\Omega$ and cannot enter the set $\Omega_1$ which contains $(0,0)$.
This and the second equation in \rfb{exmp1_exo} imply that the
state trajectory $(w_1,w_2)$ must come arbitrarily close to the
vertical axis in the phase-plane infinitely often which, along
with the first equation in \rfb{exmp1_exo}, means that $w_1$ must
change sign infinitely often and so $(w_1,w_2)$ passes through a
point $(0,\tilde w_{20})$ for some $\tilde w_{20}\in(0,1)$. From
the claim established earlier, and the uniqueness of solutions to
\rfb{exmp1_exo} backwards in time, it follows that the trajectory
$(w_1,w_2)$ is periodic.

We have shown that all the trajectories starting inside the open
set $\Omega$ containing the origin are periodic, i.e. every point $w_0
\in\Omega$ is nonwandering for the dynamics of the exosystem
\rfb{exmp1_exo}. (We remark that it is possible to show that all the
trajectories of \rfb{exmp1_exo} are periodic.) It follows from \rfb{lyap_cond}
that the origin is a Lyapunov stable equilibrium point for the exosystem
\rfb{exmp1_exo}. Below we consider an output regulation problem using
this exosystem.

Consider the nonlinear plant \vspace{-2mm}
$$ \dot x_1 \m=\m x_2 - w_1\m, \qquad \dot x_2 \m=\m -x_1-x_2
   -\sin(x_2)+(1+x_2^2)u  \vspace{-2mm}$$
with output $y=x_1$. The disturbance signal $\sbm{w_1\\ w_2}$ is
generated by the exosystem \rfb{exmp1_exo}. The control objective
is to ensure that $\lim_{t\to\infty}y(t)=0$. Hence in this example
the error is $e=x_1$ and the linearization in \rfb{plant} and
\rfb{linerror} are determined by \vspace{-2mm}
\begin{align*}
  &A \m=\m \bbm{0&1\\-1&-2}, \qquad B \m=\m \bbm{0\\1}, \qquad
  P \m=\m \bbm{-1&0\\0&0},\\
  &C \m=\m \bbm{1&0}, \qquad\qquad D \m=\m 0 \m,\qquad\qquad Q \m=\m
  \bbm{0&0}.
\end{align*}
\vspace{-13.5mm}

\begin{center}
\includegraphics[width=66mm]{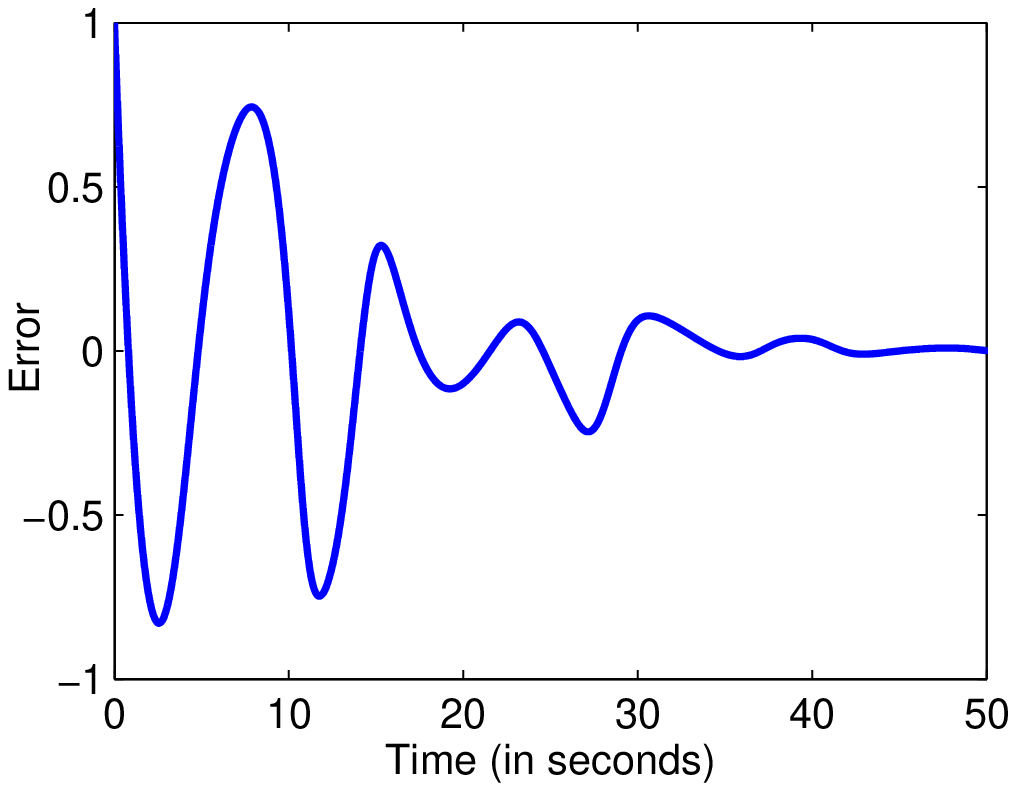}

{Figure $3$. Plot of the error (plant output) for Example 5.1}
\vspace{-2mm}
\end{center}

It is easy to verify that $A$ is stable and the pair of matrices
$\left(\sbm{C & Q},\sbm{A & P\\ 0 &S}\right)$ is detectable. The
regulator equations \rfb{nlreg1}, \rfb{nlreg2} can be solved,
because they reduce to simple algebraic equations. We get
\vspace{-1mm}
$$ \pi(w_1,w_2) \m=\m \bbm{0\\w_1} \m, \quad \g(w_1,w_2) \m=\m
   \frac{w_1+w_2-w_1^4+\sin(w_1)}{1+w_1^2} \FORALL
   w_1\m,w_2\in\rline\m. \vspace{-1mm} $$
Hence all the conditions in Corollary \ref{nlDcon} hold. The
linearization of $\g$ at $(0,0)$ is $\G=\bbm{2 & 1}$. The matrix
$\Ascr$ in \rfb{Ascr} is exponentially stable for $C_c=\G$ and
$B_c=\bbm{-0.2\\ -0.02}$. We have now completely determined a
second order controller of the form \rfb{contr_simple} which,
according to Corollary \ref{nlDcon}, is a controller of minimal
order that solves the local error feedback regulator problem for
the system under consideration. Note that in this example, the
observer-based design outlined above Remark \ref{Putin_occupies_Crimea}
would have resulted in a controller of order $n_c=6$.

We performed a simulation in MATLAB Simulink using the initial
conditions $x_1(0)=1$, $x_2(0)=-1$, $\xi_1(0)=0$, $\xi_2(0)
=0$, $w_1(0)=0.5$ and $w_2(0)=0.25$. The resulting error tends
to zero, as seen in Figure $3$.
\end{example}

\begin{example} \label{exmp2}
Consider the second order nonlinear plant \vspace{-1.5mm}
$$ \dot x_1 \m=\m x_2+x_1^2-w_1^2 \m, \qquad \dot x_2 \m=\m -x_1
   -x_2+u \vspace{-1.5mm} $$
with output $y=x_1$. The disturbance signal $w=\sbm{w_1\\ w_2}$ is
generated by the exosystem \vspace{-1.5mm}
$$ \bbm{\dot w_1\\ \dot w_2} \m=\m \bbm{0&1\\-1&0}\bbm{w_1\\w_2}
   \m. \vspace{-1mm} $$
The control objective is to ensure that $\lim_{t\to\infty} y(t)=0$,
i.e. the error is $e=x_1$. The regulator equations \rfb{nlreg1},
\rfb{nlreg2} for the above plant and exosystem reduce to algebraic
equations that have the unique solution \vspace{-1.5mm}
\BEQ{unique_g}
  \pi(w) \m=\m \bbm{0\\w_1^2} \m, \qquad \g(w) \m=\m w_1^2+2w_1w_2
  \FORALL w\in\rline^2 \m. \vspace{-1.5mm}
\end{equation}
In this example, we have \vspace{-2mm}
\begin{align*}
  & A \m=\m \bbm{0 & 1\\ -1 & -1}\m, \quad B \m=\m \bbm{0\\ 1}\m,
  \quad P \m=\m \bbm{0 & 0\\ 0 & 0}\m, \quad S\m=\m \bbm{0&1\\
  -1&0} \m,\\[4pt]
  & C \m=\m \bbm{1 & 0}\m, \quad D \m=\m 0\m, \quad Q \m=\m
  \bbm{0&0}\m.
\end{align*}
The exosystem clearly has no influence on the linearized plant or
on the error.

We claim that there exists no second order controller of the
form \rfb{gencon} that solves the local error feedback regulator
problem for the above plant and exosystem. Thus a minimal order
controller that solves this problem must be of order at least 3.
We will establish the above claim by contradiction. To this end,
we assume that the controller \rfb{gencon} with order $n_c=2$ solves
the local error feedback regulator problem. Recall the matrices
$F$, $G$ and $K$ from \rfb{shooting_in_Brussels} obtained by
linearizing the controller. Since the controller \rfb{gencon}
solves the regulator problem, its closed-loop system with the
plant is locally exponentially stable, i.e. the matrix $\Ascr_f$
in \rfb{lemonade} is stable. Lemma \ref{lm:IsBy} gives that there
exist $C^2$ maps $\pi:W^o\to\rline^2$ and $\sigma:W^o\to\rline$,
where $W^o\subset\rline^2$ is an open set containing zero, such
that $\pi(0)=0$, $\sigma(0)=0$ and \rfb{lm1}--\rfb{lm3} hold.
Since the solution to the regulator equations \rfb{nlreg1},
\rfb{nlreg2} for this example is unique and is given by
\rfb{unique_g}, comparing \rfb{lm1} with \rfb{nlreg1} and
\rfb{lm3} with \rfb{nlreg2}, we get \vspace{-1.5mm}
\BEQ{unique_soln}
  \pi(w) \m=\m \bbm{0\\w_1^2} \m,\qquad \theta(\sigma(w))
  \m=\m w_1^2+2w_1w_2 \FORALL w\in\rline^2 \m. \vspace{-1.5mm}
\end{equation}
Since $\sigma$ is a $C^2$ function, using Taylor's theorem
\cite[Theorem 10, page 179]{Mar:74}, $\sigma(w)$ and
$\frac{\partial\sigma(w)}{\partial w}$ can be expressed as
\vspace{-3.5mm}
\BEQ{sigma}
  \sigma(w) \m=\m L \bbm{w_1\\w_2}+\frac{1}{2}\bbm{a_1&b_1&2c_1
  \\ a_2&b_2&2c_2}\bbm{w_1^2\\w_2^2\\w_1w_2} + \bbm{r_1(w)\\
  r_2(w)} \m,
\end{equation}
\BEQ{psigma}
  \frac{\partial\sigma(w)}{\partial w} \m=\m L +\bbm{a_1w_1+c_1
  w_2&b_1w_2+c_1w_1\\ a_2w_1+c_2w_2&b_2w_2+c_2w_1} + \bbm{r_3(w)
  \\ r_4(w)} \m, \vspace{1mm}
\end{equation}
where $L$ is a real $2\times2$ matrix, $a_l,b_l,c_l\in\rline$ for
$l=1,2$, $r_1$ and $r_2$ are real-valued $C^2$ functions satisfying
$(|r_1(w)|+|r_2(w)|)/\|w\|^2\to 0$ as $\|w\|\to0$ and $r_3$ and
$r_4$ are real-valued $C^1$ functions satisfying $(|r_3(w)|+|r_4(w)|)
/\|w\|\to 0$ as $\|w\|\to 0$. Similarly $\theta(w)$ can be expressed
as \vspace{-1.5mm}
\BEQ{theta}
  \theta(w) \m=\m K w+R(w)\m, \vspace{-1.5mm}
\end{equation}
where $R$ is a real-valued $C^2$ function satisfying $|R(w)|/\|w\|
\to 0$ as $\|w\|\to0$.

Next we show that $L=0$. Linearizing \rfb{lm2} around the origin
gives $LS=FL$. Since $F$ is a real matrix, either $\sigma_p(S)\cap
\sigma_p(F)=\emptyset$ or $\sigma_p(S)=\sigma_p(F)$. If $\sigma_p(S)
\cap\sigma_p(F)=\emptyset$, then $LS=FL$ easily implies that $L=0$.
So suppose that $\sigma_p(S)=\sigma_p(F)$. Then $F=T^{-1}ST$ for
some invertible $2\times2$ matrix $T$ and the exponential stability
of $\Ascr_f$ implies that $K\neq0$. From $LS=FL$ and $F=T^{-1}ST$ we
get $TLS=STL$, which implies that $TL=\sbm{\alpha&-\beta\\ \beta&
\alpha}$ for some $\alpha,\beta\in\cline$. So either $L=0$ or $L$
is invertible. From the second equation in \rfb{unique_soln} we get
that \vspace{-1.5mm}
$$ \left[\frac{\partial \theta(\sigma(w))}{\partial w}\right]_{w=0}
  \m=\m K L \m=\m \bbm{0&0} \m, \vspace{-1.5mm}$$
which, along with $K\neq0$, gives that $L=0$.

Substituting for $\sigma(w)$ and $\frac{\partial\sigma(w)}{\partial
w}$ from \rfb{sigma} and \rfb{psigma} into \rfb{lm2} and equating
the quadratic terms in the resulting expression (while using $L=0$)
gives \vspace{-2.5mm}
\BEQ{Manziel_injured}
  \bbm{a_1w_1+c_1w_2 \ & b_1w_2+c_1w_1 \\ a_2w_1+c_2w_2 \ & b_2
  w_2+c_2 w_1} S \bbm{w_1\\w_2} \m=\m \frac{1}{2}\m F\bbm{a_1 & b_1 &
  2c_1\\ a_2 & b_2 & 2c_2}\bbm{w_1^2\\ w_2^2\\ w_1 w_2} \m.
  \vspace{-1.5mm}
\end{equation}
By comparing the coefficients of $w_1^2$, $w_2^2$ and $w_1w_2$ on both
sides of the above equation we get, after a simple calculation, that
\vspace{-2.5mm}
\BEQ{bolt_wins_relay}
  F\bbm{c_1\\c_2} \m=\m \bbm{a_1-b_1\\a_2-b_2} \m, \qquad F^2
  \bbm{c_1\\c_2} \m=\m -4\bbm{c_1\\c_2} \m. \vspace{-1.5mm}
\end{equation}
The second equation in \rfb{bolt_wins_relay} means that either
$\sbm{c_1\\c_2}=0$ or $F$ has eigenvalues at $\pm2i$. We will prove
below that neither of these possibilities can occur. This means
that there is no $C^2$ map $\sigma$ for which \rfb{lm2} holds for
any second order controller. This contradicts Lemma \ref{lm:IsBy}
and hence no second order controller of the form \rfb{gencon} can
solve the local error feedback regulator problem.

Suppose that $\sbm{c_1\\c_2}=0$. Then the first equation in
\rfb{bolt_wins_relay} gives $\sbm{a_1\\a_2}=\sbm{b_1\\b_2}$. Using
this, \rfb{unique_soln}, \rfb{sigma}, \rfb{theta} and $L=0$ we get
the following contradiction: \vspace{-2mm}
$$ 1 \m=\m \lim_{w_1\to 0} \frac{\theta(\sigma(w_1,0))}{w_1^2} \m=\m
   \lim_{w_1\to 0} \frac{K\sigma(w_1,0)+R(\sigma(w_1,0))}{w_1^2} \m=\m
   K\bbm{a_1\\a_2} \m, $$
$$ 0 \m=\m \lim_{w_2\to 0}\frac{\theta(\sigma(0,w_2))}{w_2^2}\m=\m
   \lim_{w_2\to 0}\frac{K\sigma(0,w_2)+R(\sigma(0,w_2))}{w_2^2} \m=\m
   K\bbm{a_1\\a_2} \m. $$
Here we have used the following fact: Denote $w_{\alpha}=\bbm{\alpha
w_1,(1-\alpha)w_2}$and $\|a\|=\sqrt{a_1^2+a_2^2}$. Then, since
$\sbm{c_1\\c_2}=0$ and $\sbm{a_1\\a_2}=\sbm{b_1\\b_2}$, for $\alpha=0$
and $\alpha=1$ \vspace{-2mm}
\BEQ{zero_limit}
  \lim_{w_{\alpha}\to 0} \frac{|R(\sigma(w_{\alpha}))|}{\|w_{\alpha}
  \|^2} \m=\m \lim_{w_{\alpha}\to 0}\frac{|R(\sigma(w_{\alpha}))|}
  {\|\sigma(w_{\alpha})\|} \frac{\|\sigma(w_{\alpha})\|}{\|w_{\alpha}
  \|^2} \m=\m \|a\| \lim_{w_{\alpha}\to 0}\frac{|R(\sigma
  (w_{\alpha}))|}{\|\sigma(w_{\alpha})\|} \m=\m 0 \m.
\end{equation}

Suppose that $F$ has eigenvalues at $\pm2i$, i.e. $F=T^{-1}\sbm{0&2
\\-2&0}T$ for some invertible real matrix $T$. We assume (without loss
of generality) that the coordinate system of the controller has been
chosen so that $F=\sbm{0&2\\-2&0}$. Substituting this $F$ into
\rfb{Manziel_injured} and comparing the coefficients of $w_1^2$,
$w_2^2$ and $w_1w_2$ on both sides of the resulting equation gives,
after a simple calculation, that $\sbm{a_1\\a_2}=\sbm{c_2\\-c_1}$ and
$\sbm{b_1\\b_2}=\sbm{-c_2\\c_1}$. Using this, \rfb{unique_soln},
\rfb{sigma}, \rfb{theta}, \rfb{zero_limit} and $L=0$ we get the
following contradiction: \vspace{-1.5mm}
$$ 1 \m=\m \lim_{w_1\to 0} \frac{\theta(\sigma(w_1,0))}{w_1^2} \m=\m
   \lim_{w_1\to 0} \frac{K\sigma(w_1,0)+R(\sigma(w_1,0))}{w_1^2} \m=\m
   K\bbm{c_2\\-c_1} \m, \vspace{-1.5mm} $$
$$ 0 \m=\m \lim_{w_2\to 0}\frac{\theta(\sigma(0,w_2))}{w_2^2}\m=\m
   \lim_{w_2\to 0}\frac{K\sigma(0,w_2)+R(\sigma(0,w_2))}{w_2^2} \m=\m
  -K\bbm{c_2\\-c_1} \m. \vspace{-1.5mm} $$

Although no second order controller can solve the local error feedback
regulator problem for the plant and the exosystem in this example, it
is easy to find a third order controller that does solve this problem.
Define the matrices $\Phi$ and $\L$ and the map $\tau:\rline^2\to
\rline^3$ as follows: \vspace{-2.5mm}
$$ \Phi \m=\m \bbm{0&0&0\\0&0&-2\\0&2&0} \m, \qquad \L \m=\m
   \bbm{0.5&1&0.5} \m, \qquad \tau(w_1,w_2) \m=\m \bbm{w_1^2+w_2^2\\
   2w_1w_2\\w_1^2-w_2^2} \m. \vspace{-2mm} $$
Using Definition \ref{immersion}, it can be verified that the
autonomous system $(\rline^2,S,\g)$ is immersed into the
autonomous system $(\rline^3,\Phi,\L)$ via the mapping $\tau$.
Clearly $(\L,\Phi)$ is detectable, $A$ is stable, $\GGG(0)\neq 0$
and $\GGG(\pm2i)\neq 0$. Hence all the conditions of Theorem
\ref{newmain} are satisfied and a third order controller of the form
\rfb{contr} with $\phi(\xi)=\Phi\xi$ and $\l(\xi)=\L\xi$ that solves
the regulator problem can be constructed. In contrast, the observer-based
controller design outlined above Remark \ref{Putin_occupies_Crimea}
would result in a controller of order $n_c=8$.
\end{example}

\begin{example} \label{exmp3}
Consider the boost converter shown in Figure 1, for which we already
gave some background in Section \ref{sec1}. We assume that the
switches are operated so fast that instantaneous values of the
currents and voltages in the switches can be replaced by their average
values over short time intervals, for instance, the sampling period,
or the time interval between two consecutive turn-on times of the
upper switch (which need not be of constant length). $q$ and $\bar q$
are complementary binary signals and $q=1$ means that the upper switch
is closed. We denote by $\Dscr$ the short-time average value of $q$, so
that $\Dscr\in[0,1]$. The state variables $z_1,z_2$ and the inputs $v$
and $i_e$ are considered practically equal to their short-time
averaged values. It is easy to derive the equations corresponding to
the averaged variables: \vspace{-2mm}
\BEQ{aver_model}
   C\dot z_1 \m=\m -i_e - \frac{z_1}{R} + \Dscr z_2 \m,\qquad
   L\dot z_2 \m=\m -rz_2 + v - \Dscr z_1 \m, \vspace{-2mm}
\end{equation}
where $C>0$ is the capacitance, $L>0$ is the inductance, $R>0$ is the
resistance of the load and $r>0$ is a small resistance that accounts
for the losses in the inductor. For a rigorous justification of this
averaging process we refer to \cite{Kassakian}, \cite{Bentsman},
\cite{MarWei}.

To bring these equations into the framework of Section \ref{sec2}, we
consider an operating point (an equilibrium state) corresponding to
the inputs $v=v_0>0$, $i_e=0$, $\Dscr=\Dscr_0\in(0,1)$. The
corresponding equilibrium state $[z_{10}\ z_{20}]^\top$ can be
computed by setting $\dot z_1=0$ and $\dot z_2=0$ in \rfb{aver_model}:
\vspace{-2mm}
$$ -\frac{z_{10}}{R} + \Dscr_0 z_{20} \m=\m 0 \m,\qquad
   -rz_{20} + v_0 - \Dscr_0 z_{10} \m=\m 0 \m, \vspace{-1mm}$$
whence \vspace{-2mm}
$$ z_{10} \m=\m \frac{\Dscr_0}{\frac{r}{R}+\Dscr_0^2} \m v_0 \m,\qquad
   z_{20} \m=\m \frac{\frac{1}{R}}{\frac{r}{R}+\Dscr_0^2} \m v_0 \m.$$
It is assumed that this is a desirable equilibrium point, i.e.
$z_{10}$ is exactly the reference output voltage. The input voltage
$v$ may deviate from $v_0$ (for instance, batteries running on low
charge), but this deviation is considered to be a constant at the
time scale of interest. The disturbance current $i_e$ is considered
to be sinusoidal, with known frequency $\alpha>0$. For instance,
this disturbance might be caused by a single-phase DC/AC converter
taking its power from this point and delivering current to some AC
load, in which case $\alpha$ would be twice the frequency of the
output voltage. Such a configuration would be typical for an
uninterruptible power supply (UPS). The state and input variables will
be the deviations of the original variables from their values at the
operating point: \vspace{-3mm}
$$ x_1 \m=\m z_1-z_{10} \m,\quad x_2 \m=\m z_2-z_{20} \m,\quad
   w_1 \m=\m v-v_0 \m,\quad u \m=\m \Dscr-\Dscr_0 \m. \vspace{-3.5mm}$$

Using \rfb{aver_model}, and the notation $w_2=i_e$, the deviations
satisfy the equations: \vspace{-1.5mm}
\BEQ{dev_eq1}
   \dot x_1 \m=\m -\frac{x_1}{RC} + \frac{\Dscr_0+u}{C} x_2 +
   \frac{z_{20}}{C} u - \frac{1}{C} w_2 \m, \vspace{-1mm}
\end{equation}
\BEQ{dev_eq2}
   \dot x_2 \m=\m -\frac{\Dscr_0+u}{L} x_1 - \frac{r}{L} x_2 -
   \frac{z_{10}}{L} u + \frac{1}{L} w_1 \m. \vspace{-1mm}
\end{equation}
The disturbance signal $w_1$ is an unknown constant while $w_2(t)=
a\cos(\alpha t+\phi)$, where $a$ and $\phi$ are unknown. We assume
that $w_1$ and $w_2$ are generated by the exosystem \vspace{-2mm}
\BEQ{Ex3_exo}
  \dot w \m=\m Sw \m, \quad w \m=\m \sbm{w_1\\w_2\\w_3} \m, \quad
  S \m=\m \sbm{0 & 0 & 0\\ 0 & 0 & \alpha \\ 0 & -\alpha & 0} \m.
  \vspace{-1mm}
\end{equation}
We want to regulate $x_1$ to zero and therefore the error is
\vspace{-3mm}
\BEQ{error_Ex3}
   e \m=\m x_1 \m. \vspace{-3mm}
\end{equation}
It is well known that it is difficult to control the higher voltage in
a boost converter because of the unstable zero dynamics (which can be
seen from the presence of a right-half plane zero in the transfer
function of the linearization from $u$ to $x_1$), see for instance
\cite{naim_97} and the references therein.

Denote $x=\sbm{x_1\\ x_2}$. We can rewrite \rfb{dev_eq1},
\rfb{dev_eq2} and \rfb{error_Ex3} in the standard form of
\rfb{nlplant} and \rfb{error}, by defining the appropriate $C^\infty$
functions $f$ and $h$ on $\rline^2\times\rline\times\rline^3$. These
functions satisfy $f(0,0,0)=0$ and $h(0,0,0)=0$. The linearization of
\rfb{dev_eq1}, \rfb{dev_eq2} and \rfb{error_Ex3} around $(0,0,0)$ is
as in \rfb{plant}--\rfb{linerror}, with \vspace{-2mm}
$$ A \m=\m \bbm{-\frac{1}{RC} & \frac{\Dscr_0}{C} \vspace{1mm}\\
   -\frac{\Dscr_0}{L} & -\frac{r}{L}}, \qquad B \m=\m \bbm{\frac
   {z_{20}}{C}\vspace{1mm} \\ -\frac{z_{10}}{L}}, \qquad P \m=\m
   \bbm{0 & -\frac{1}{C} & 0 \vspace{1mm} \\ \frac{1}{L} & 0 & 0},
   \vspace{-2mm}$$
$$ C \m=\m \bbm {1 & 0}, \qquad\qquad D \m=\m 0 \m, \qquad\qquad Q
   \m=\m \bbm{0 & 0 & 0}.$$
It is easy to see (from trace\m$A<0$ and det\m$A>0$) that $A$ is
stable. The detectability assumption contained in Corollary
\ref{nlDcon} can be verified (for any combination of parameter values)
using the Hautus test.

The nonlinear regulator equations \rfb{nlreg1}, with the notation
$\pi=\sbm{\pi^1\\ \pi^2}$, are
\begin{align*}
  \alpha\frac{\partial\pi^1}{\partial w_2}w_3-\alpha\frac{\partial
  \pi^1}{\partial w_3}w_2 &\m=\m -\frac{\pi^1}{RC} + \frac{\Dscr_0+
  \gamma}{C}\pi^2 + \frac{z_{20}}{C} \gamma - \frac{1}{C} w_2 \m,\\
  \alpha\frac{\partial\pi^2}{\partial w_2}w_3-\alpha\frac{\partial
  \pi^2}{\partial w_3}w_2 &\m=\m -\frac{\Dscr_0+\gamma}{L} \pi^1 -
  \frac{r}{L}\pi^2 - \frac{z_{10}}{L} \gamma + \frac{1}{L} w_1 \m.
\end{align*}
The regulator equation \rfb{nlreg2} is $\pi^1=0$. Using this, we
rewrite the above as
\BEQ{gamma_pi2_alg}
  0 \m=\m\frac{\Dscr_0+\gamma}{C} \pi^2 + \frac{z_{20}}{C} \gamma -
  \frac{1}{C} w_2 \m, \vspace{-1mm}
\end{equation}
\BEQ{gamma_pi2_pde}
  \alpha\frac{\partial\pi^2}{\partial w_2}w_3-\alpha\frac{\partial
  \pi^2}{\partial w_3}w_2 \m=\m - \frac{r}{L} \pi^2 - \frac{z_{10}}
  {L} \gamma + \frac{1}{L} w_1 \m.
\end{equation}
Substituting $\g$ from \rfb{gamma_pi2_pde} into \rfb{gamma_pi2_alg} we
get
\begin{align}
  &\left(\Dscr_0+\frac{L}{z_{10}}\left[-\alpha\frac{\partial\pi^2}
  {\partial w_2}w_3+\alpha\frac{\partial\pi^2}{\partial w_3}w_2-\frac
  {r}{L}\pi^2+\frac{1}{L}w_1\right]\right)\pi^2 \nonumber\\
  &+z_{20}\frac{L}{z_{10}}\left[-\alpha\frac{\partial\pi^2}{\partial
  w_2}w_3+\alpha\frac{\partial\pi^2}{\partial w_3}w_2-\frac{r}{L}\pi^2+
  \frac{1}{L}w_1\right]-w_2 \m=\m 0 \m.\label{pi2_pde}
\end{align}
This is a first order quasilinear PDE in the unknown function
$\pi^2:\rline^3\to\rline$, with no boundary conditions, only a
one-point condition: $\pi^2(0)=0$. We will solve it in a neighborhood
of $0\in\rline^3$. Our approach is to determine $\pi^2$ on circles of
the following type: $w_1$ is constant, $w_2=\rho\cos\tau$ and $w_3=
\rho\sin\tau$, where $\rho>0$ is a constant and $\tau\in[0,2\pi)$.
The motivation for our approach is that on such circles, the PDE
\rfb{pi2_pde} becomes an ODE. (In fact, these circles are the
projections of the characteristic curves of \rfb{pi2_pde} in
$\rline^4$, onto the space $\rline^3$ with coordinates $w_1$, $w_2$
and $w_3$.) For each fixed $w_1$ and $\rho$, we define a function
$\psi$ on $[0,2\pi)$ by \vspace{-2mm}
\BEQ{psi_defn}
   \psi(\tau) \m=\m \pi^2(w_1,\rho\cos\tau,\rho\sin\tau) \m.
   \vspace{-2mm}
\end{equation}
It follows by the chain rule that \vspace{-2mm}
$$ \frac{\dd\psi}{\dd\tau} \m=\m -\frac{\partial\pi^2}{\partial w_2}
   \rho\sin\tau+\frac{\partial\pi^2}{\partial w_3}\rho\cos\tau \m=\m
   -\frac{\partial\pi^2}{\partial w_2}w_3+\frac{\partial\pi^2}
   {\partial w_3}w_2 \m.$$
Using the previous expression, \rfb{pi2_pde} can be rewritten as
\BEQ{pi2_ode}
   \frac{\dd\psi}{\dd\tau} \m=\m \frac{r\psi^2+(rz_{20}-w_1-\Dscr_0
   z_{10})\psi-z_{20}w_1+z_{10}\rho\cos\tau}{\alpha L(\psi+z_{20})} \m.
\end{equation}
This ODE has to be solved for $\psi$ that satisfies the periodic
boundary condition $\psi(0)=\psi(2\pi)$. From the function $\psi$,
corresponding to different values of $w_1$ and $\rho$, we will then
construct the function $\pi^2$ using \rfb{psi_defn}.

First {\em we claim} that if $\rho<\Dscr_0z_{20}$ and
$\psi(0)>-z_{20}$, then there exists a global in time unique solution
to \rfb{pi2_ode}.  Our claim is a consequence of the following two
facts: (i) If the two inequalities mentioned above hold, then
$\psi(\tau)>-z_{20}$ for all $\tau\geq 0$ along the trajectory of
\rfb{pi2_ode}. Indeed, if this were false, then for some $t>0$
$\lim_{\tau\to t}\psi(\tau) =-z_{20}$. Let $t$ be the smallest such
number, so that $\psi(\tau)>-z_{20}$ for $\tau<t$. But such a $t$
cannot exist since the limit of $\frac{\dd\psi} {\dd\tau}$ (as given
in \rfb{pi2_ode}) for $\psi\to-z_{20}$, $\psi>-z_{20}$ is
$+\infty$. (ii) When $\psi$ is large and positive, the right side of
\rfb{pi2_ode} behaves linearly in $\psi$, so that $\psi$ cannot escape
to $+\infty$ in finite time.

In the sequel, we assume that \vspace{-1.5mm}
\BEQ{spiral}
  \rho \m<\m \beta \Dscr_0 z_{20} \quad \mbox{for some}\quad 0<\beta<1
  \m,\qquad \psi(0)>-z_{20} \m. \vspace{-1.5mm}
\end{equation}
We will solve \rfb{pi2_ode} with the periodic boundary condition under the
assumptions \rfb{spiral} and $\Dscr_0z_{10}>rz_{20}$. The latter assumption,
which is realistic since $r$ is usually very small, is needed because of the
following fact:

\noindent
{\bf Claim.} {\em If $\Dscr_0z_{10}=rz_{20}$, then there is no local
solution $\pi^2$ to \rfb{pi2_pde} which is of class $C^2$ and
satisfies $\pi^2(0)=0$.}

\noindent
Indeed, if our claim is false and such a $\pi^2$ exists, then for all values
of $w_1$ and $\rho$ sufficiently small there exists a solution $\psi$ to
\rfb{pi2_ode} that satisfies $\psi(0)=\psi(2\pi)$, obtained from $\pi^2$ via
\rfb{psi_defn}. Clearly there exists a $w_1<0$ and $\rho>0$ such that
$\psi$, as defined above, satisfies $(-\psi(0)-z_{20})w_1>\rho$. But for
this $\psi$, since $\Dscr_0z_{10}=rz_{20}$ (by assumption), it is easy
to verify that the right side of \rfb{pi2_ode} will be positive for
all $\tau\in [0,2\pi)$. This gives us the contradiction
$\psi(0)<\psi(2\pi)$, which proves our claim.

We will next identify the range of values for $w_1$ and $\rho$ for
which \rfb{pi2_ode} has a solution satisfying the periodic boundary
condition.  By solving the quadratic equation obtained from
\rfb{pi2_ode} by setting $\frac{\dd\psi}{\dd\tau}=0$ and choosing
$\cos\tau=\pm1$, we define for each $w_1$ and $\rho$ the scalars
$\psi_1$ and $\psi_2$ as follows: \vspace{-1mm}
$$ \psi_1 \m=\m \frac{-(rz_{20}-w_1-\Dscr_0z_{10})-\sqrt{(rz_{20}-w_1
   -\Dscr_0 z_{10})^2 - 4r(-z_{20}w_1+z_{10}\rho)}}{2r} \m,
   \vspace{-1mm} $$
$$ \psi_2 \m=\m \frac{-(rz_{20}-w_1-\Dscr_0z_{10})-\sqrt{(rz_{20}-w_1
   -\Dscr_0 z_{10})^2 - 4r(-z_{20}w_1-z_{10}\rho)}}{2r} \m. $$
Let $w_1^{\max} \m=\m \Dscr_0z_{10}-rz_{20}$. For each $w_1$
satisfying $|w_1|< w_1^{\max}$ let \vspace{-1mm}
$$ \rho_{w_1}^{\max} \m=\m \min\left\{\beta \Dscr_0z_{20}, \m \frac
   {(rz_{20} - w_1-\Dscr_0 z_{10})^2}{4rz_{10}}-\frac{z_{20}|w_1|}
   {z_{10}}\right\} \m. \vspace{-1mm}$$
Consider the set $\Wscr\subset\rline^2$ defined by $\Wscr=\{(w_1,\rho)
\m\big|\m |w_1|<w_1^{\max},\m 0\leq\rho<\rho_{w_1}^{\max}\}$. For each
$(w_1,\rho)\in\Wscr$ and every sufficiently small $\e>0$ the following
hold: \vspace{-3mm}
\begin{enumerate}[(1)]
 \item $\psi_1, \psi_2\in\rline$ with $\psi_1>\psi_2>-z_{20}$ and
 as $w_1$ and $\rho$ tend to zero, $\psi_1$ and $\psi_2$ tend to zero,
 \vspace{-3mm}
 \item the solution $\psi$ of \rfb{pi2_ode} with $\psi(0)=\psi_1+\e$
 satisfies $\psi(2\pi)<\psi(0)$,\vspace{-3mm}
 \item the solution $\psi$ of \rfb{pi2_ode} with $\psi(0)=\psi_2-\e$
 satisfies $\psi(2\pi)>\psi(0)$. \vspace{-2mm}
\end{enumerate}
It is easy to verify item (1) via algebraic manipulations. To see that
items (2) and (3) hold, observe that when $\psi$ is near but larger
than $\psi_1$ the right side of \rfb{pi2_ode} is negative and when it
is near but smaller than $\psi_2$ the right side of \rfb{pi2_ode} is
positive. From items (1)-(3) it follows that there exists a $\psi_0\in
[\psi_2,\psi_1]$ such that the solution of \rfb{pi2_ode} with $\psi(0)=
\psi_0$ satisfies $\psi(0)= \psi(2\pi)$. A simple algorithm for
estimating $\psi_0$, which uses the fact that no two trajectories of
\rfb{pi2_ode} can intersect in the $\psi$--$\tau$ plane, is as follows.
For $n\geq1$, let $\psi^n$ denote the solution of \rfb{pi2_ode} with
$\psi^1(0)=(\psi_1+\psi_2)/2$ and $\psi^n(0)=\psi^{n-1}(2\pi)$ for
$n>1$. It follows from items (1)-(3) that $\psi^n(0)>-z_{20}$ for all
$n$ and $\psi^n(0)\to\psi_0$ as $n\to\infty$.

For each $(w_1,\rho)\in\Wscr$, we first compute $\psi_0$ and the
solution $\psi$ to \rfb{pi2_ode} that satisfies $\psi(0)=\psi(2\pi)=
\psi_0$. Then, using \rfb{psi_defn}, we define the function $\pi^2$ on
the domain $\{(w_1,w_2,w_3)\m\big|\m |w_1|<w_1^{\max},\m 0\leq
\sqrt{w_2^2+w_3^2}<\rho_{w_1}^{\max}\}$. From the center manifold
theorem \cite[Theorem 3.22]{HaIo:11}, there exists a smooth locally
attractive invariant manifold for the dynamics of \rfb{pi2_ode} (with
$w_2$ in place of $\rho\cos\tau$) and \rfb{Ex3_exo}. Since $\pi^2$ is
constructed using periodic solutions to \rfb{pi2_ode} and
\rfb{Ex3_exo}, which lie on the above manifold for small initial data,
we can conclude that it defines a manifold which coincides with the
above manifold locally.  Hence $\pi^2$ is locally smooth. The function
$\gamma$ can be obtained using \rfb{gamma_pi2_alg} and it will also be
locally smooth.

\begin{center} \vspace{-6mm}
\includegraphics[scale=0.62]{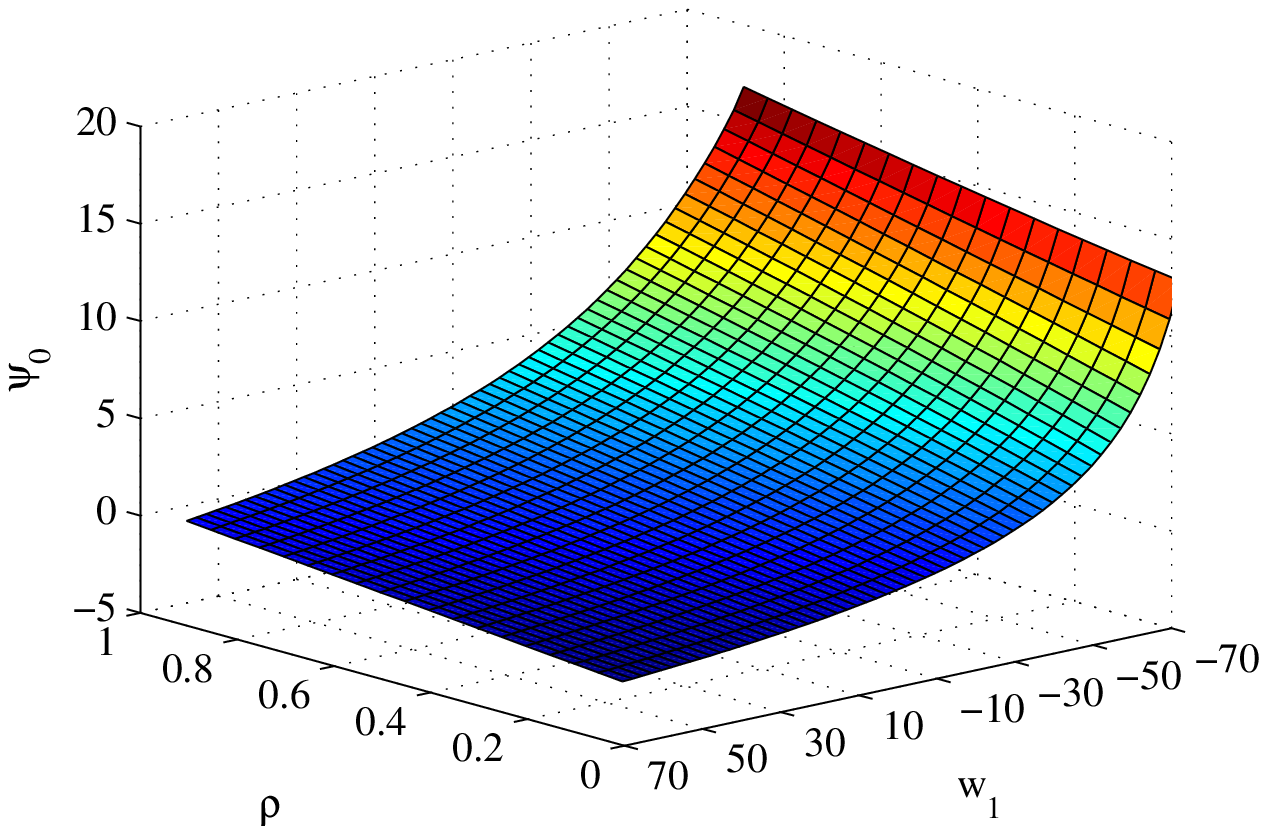} \vspace{0.5mm}

{Figure 4. Surface plot of the initial conditions $\psi_0$ (as a
function of $w_1$ and $\rho$) for which the solution $\psi$ to
\rfb{pi2_ode} satisfies $\psi_0=\psi(0)=\psi(2\pi)$.} \vspace{-2mm}
\end{center}

For our simulation we choose the nominal input voltage $v_0=100\m$V,
the desired voltage (across the load) $z_{10}=400\m$V, the load
resistance $R=400\m\Omega$, $r=0.25\m\Omega$ and $\beta=0.9$ ($\beta$
appears in \rfb{spiral}). Assuming that the switching frequency of the
switches in Figure 1 is $20\m$KHz, we choose $L=4\m$mH and $C=40\m
\mu$F to reduce the ripple in the inductor current and the voltage
across the load to reasonable values. The disturbance frequency is
$\alpha=200\pi$rad/sec (twice the nominal grid frequency). From the
above values we get that $\Dscr_0=0.2474$ and $z_{20}=4.04\m$A. We
first identify the set $\Wscr$ described earlier and compute $\psi_0$
for each $(w_1,\rho)\in\Wscr$. Figure 4 shows the surface plot of
$\psi_0$. As expected from the local smoothness of $\pi^2$, the
surface of $\psi_0$ is smooth near $(w_1,\rho)=(0,0)$. It is clear
from the plot that $\psi_0$ is a smooth function on $\Wscr$. From the
continuous dependence of solutions to \rfb{pi2_ode} on initial
conditions, we can conclude that $\pi^2$ and $\gamma$ (computed as
discussed earlier) will be smooth on their common domain of
definition. In Figure 5, we plot $\psi$ and $\gamma$ on the circles
contained in the set $\{(w_1,w_2,w_3)\m\big|\m w_1\in\{-50,0,50\},
\sqrt{w_2^2+w_3^2}=0.3\}$.

By solving \rfb{lreg1} we get $\G=\bbm{2.53\times10^{-3}& 1.06\times
10^{-4} & -2.56\times10^{-2}}$. The matrix $\Ascr$ in \rfb{Ascr} is
exponentially stable for $C_c=\G$ and $ B_c \m=\m \bbm{7.5 & -0.29
& 0.06}^{\text{T}}$. We have now completely determined a third order
controller of the form \rfb{contr_simple} which, according to Corollary
\ref{nlDcon}, is a controller of minimal order that solves the local
error feedback regulator problem for the system
\rfb{dev_eq1}--\rfb{error_Ex3}. We have performed a simulation in
Simulink with the following initial conditions: $x_1(0)=5$, $x_2(0)=0$,
$\xi_1(0)=0$, $\xi_2(0)=0$, $w_1(0)=10$, $w_2(0)=0.8$ and $w_3(0)=0$.
The resulting error decays asymptotically to zero, as seen in Figure
$6$. We remark that in this example the observer-based design outlined above
Remark \ref{Putin_occupies_Crimea} would have resulted in a controller
of order $n_c=8$.

\vspace{-1mm}
\end{example}

\begin{center} \vspace{-5mm}
\includegraphics[scale=0.58]{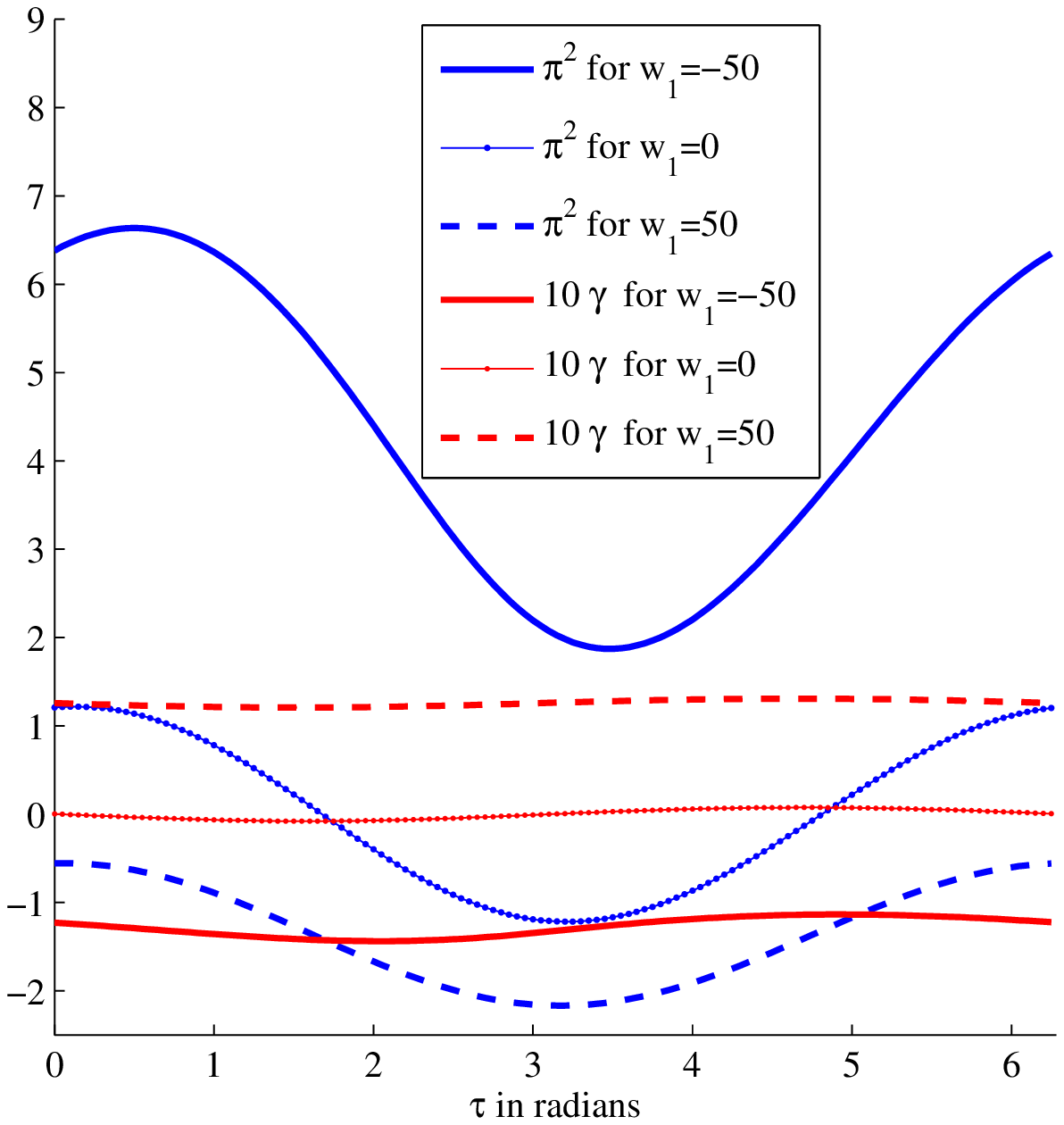} \vspace{-1mm}

{Figure $5$. Plot of $\pi^2$ and (10 times) $\gamma$ on circles of radius
$\rho=0.3$ and $w_1\in\{-50,0,50\}$, as a function of the angle $\tau$.
All the functions resemble sinusoids.}
\end{center}

\begin{center} \vspace{-2mm}
\includegraphics[scale=0.88]{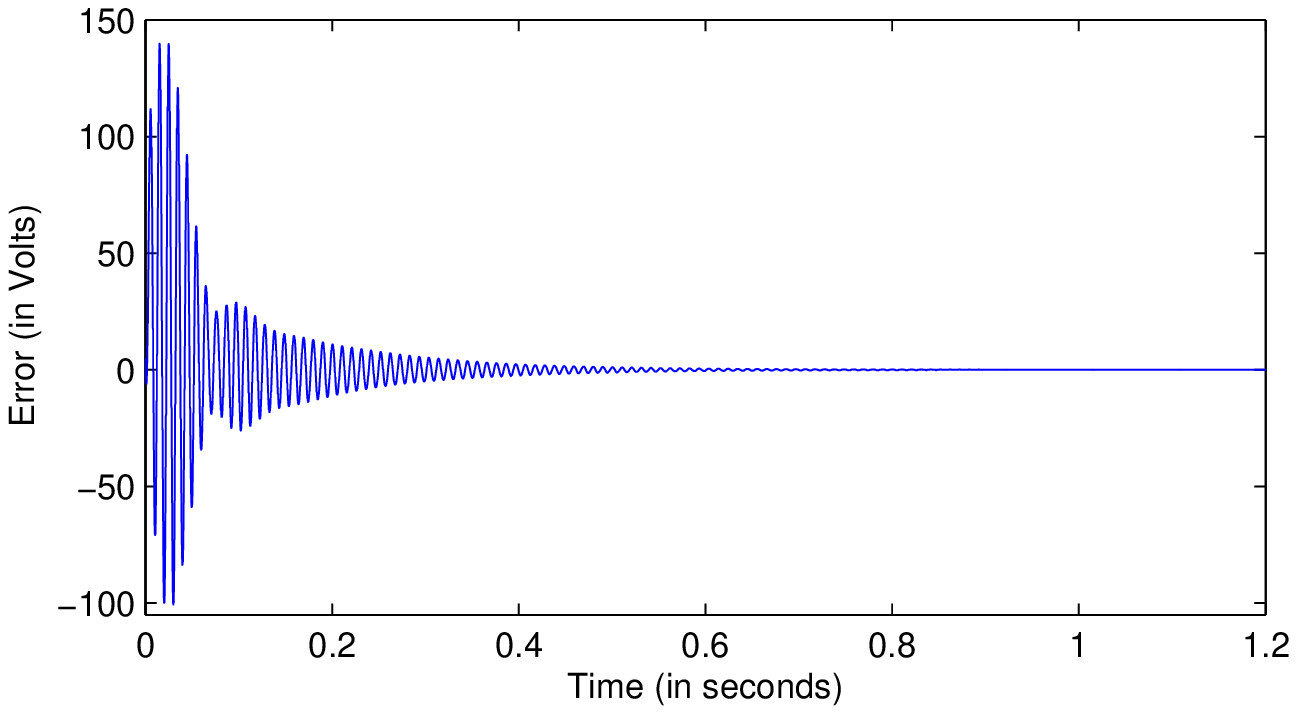} \vspace{1mm}

{Figure $6$. The error, the deviation of $z_1$ from its reference
value (400 V)}
\end{center}

\vspace{-5mm}
\section{Conclusions}\label{sec6} 
\vspace{-2mm}

\ \ \ In this work we have focussed on finding lower order controllers
that solve the local error feedback regulator problem for locally
exponentially stable nonlinear plants that are SISO from control input
to output. Under certain assumptions, we have presented a methodology
for constructing controllers whose order is equal to the order of
a detectable immersion associated with the exosystem and a solution
to the regulator equations.
Future work will address the issue of finding minimal order controllers
for multi-input multi-output plants. A relevant research area
is the construction of finite-dimensional controllers for regulating
the output of nonlinear infinite-dimensional systems (preliminary
results are in \cite{NaWe:13}). \vspace{-3mm}


\end{document}